\numberwithin{equation}{section}
\def\<{\langle}
\def\>{\rangle}
\def\EE{{\mathcal E}}
\def\LL{{\mathcal L}}
\def\bbC{\mathbb{C}}
\def\bbZ{\mathbb{Z}}
\def\bbN{\mathbb{N}}
\def\bbD{\mathbb{D}}
\def\bbT{\mathbb{T}}
\newtheorem{lemma}{Lemma}[section]
\newtheorem{proposition}[lemma]{Proposition}%[section]
\newtheorem{theorem}[lemma]{Theorem}%[section]
\newtheorem{corollary}[lemma]{Corollary}
\theoremstyle{definition}
\newtheorem{remark}[lemma]{Remark}
\title{Unitary equivalence to truncated Toeplitz operators}
\author{E. Strouse}
\address{ Universit\'e de Bordeaux, Institut de Math\'ematiques de Bordeaux UMR 5251,
351, cours de la Lib\'eration,
F-33405 Talence cedex, France}
\email{Elizabeth.Strouse@math.u-bordeaux1.fr}
\author{D. Timotin}
\address{Institute of Mathematics of the Romanian Academy, P.O. Box 1-764, Bucharest 
014700, Romania}
\email{Dan.Timotin@imar.ro}
\author{M. Zarrabi}
\address{ Universit\'e de Bordeaux, Institut de Math\'ematiques de Bordeaux UMR 5251,
351, cours de la Lib\'eration,
F-33405 Talence cedex, France}
\email{Mohamed.Zarrabi@math.u-bordeaux1.fr}
\subjclass[2000]{47B35, 47B32, 47A45}
\keywords{Model spaces, truncated Toeplitz operators,  unitary equivalence}
\begin{document}

\begin{abstract}
In this paper we investigate operators unitarily equivalent to truncated
Toeplitz operators. We show that this class contains  certain sums of tensor products of truncated Toeplitz operators. In particular, it contains arbitrary inflations of truncated Toeplitz operators; this answers a question posed in~\cite{CGRW}. 
\end{abstract}

\maketitle

\section{Introduction}

Truncated Toeplitz operators, defined on certain subspaces of the
$H^2$ called {\it model spaces}, are a generalization of the operators
associated with Toeplitz matrices. They are introduced and discussed in great detail in a
recent survey paper by Sarason~\cite{S}. Some special cases
have appeared long ago in the literature: the  model operators for
contractions with defect number one as well as their commutant are truncated Toeplitz operators with analytic symbols (see, for instance,~\cite{S1, SNF, N}). This is a new 
area of study, and  many simple questions 
remain open. The basic reference for this subject is~\cite{S}, subsequent
work 
is done in~\cite{CGRW,Cima-2008,BCFMT, BBK}.

The truncated Toeplitz operators live on the model spaces $K_\Theta=H^2\ominus \Theta H^2$, with $\Theta$ an inner function. These are the subspaces of $H^2$ invariant for the adjoint of multiplication by the variable $z$ (see Section~\ref{se:preliminaries} 
for precise definitions). Given a model space $K_\Theta$ and a function $\phi\in L^2$, the truncated 
Toeplitz operator $A^\Theta_\phi$ is  defined on a dense subspace of 
$K_\Theta$ as the compression to $K_\Theta$ of multiplication by $\phi$. 
The function $\phi$ is then called a symbol of the operator. For $\Theta = z^n$ one obtains the usual Toeplitz matrices, but
consideration of such operators for arbitrary inner functions brings
up many new and interesting questions.

In this paper we study unitary
equivalence to truncated Toeplitz operators. This problem is discussed at length in~\cite{CGRW}, where it is shown, in particular, that all rank
one operators, two by two matrices, and normal operators are unitarily
equivalent to truncated Toeplitz operators. Thus the class of operators unitarily equivalent to truncated Toeplitz operators is larger than might be expected. In~\cite{CGRW} the authors ask several questions concerning this class; in particular, whether it contains inflations and direct sums of truncated Toeplitz operators.

The purpose of this paper is to continue along this line of investigation. By obtaining  new classes of operators unitarily equivalent to truncated Toeplitz operators,  we answer in particular a question in~\cite{CGRW} concerning inflations, and provide some partial results related to direct sums. The methods used are mostly based on analyzing the action of composition by inner functions on model spaces and on truncated Toeplitz operators. It is suggested by the proof of Theorem 5.8 in~\cite{CGRW}; a similar technique appears, in the case of Toeplitz operators on $H^2$, in a much earlier paper of Cowen~\cite{Co}. It should be noted that little is known about unitary equivalence to classical Toeplitz operators or to Toeplitz matrices.

The plan of the paper is the following: the next section is devoted to preliminaries. We introduce the notations and obtain some basic results concerning the effect on model spaces of composition by an inner function. Section~3 contains the main  result, which is applied to some particular situations in Section~4. Finally, Section~5 is devoted to a special type of direct sum, investigating when the direct sum of a truncated Toeplitz operator with the zero operator is unitarily equivalent to a truncated Toeplitz operator.

\section{Model spaces and composition}\label{se:preliminaries}

We start by introducing the main objects and results that we shall use; convenient references are~\cite{N} for model spaces and~\cite{S} for TTOs.

Let $H^2$ be the classical Hardy space of holomorphic functions on the
open unit disc $\bbD$ having square-summable Taylor coefficients at
the origin. By considering radial limits  almost everywhere with respect to Lebesgue measure $dm$ on $\partial\bbD=\bbT$,  $H^2$ can be identified with the closed subspace of
$L^2=L^2( \bbT, dm)$ consisting of functions with negative Fourier
coefficients equal to 0; its orthogonal complement $L^2\ominus H^2$ is denoted by $H^2_-$. An element $\Theta \in H^2$ is called an inner function if 
$|\Theta(e^{it})| = 1$ for almost all $t\in [0,2\pi]$; an
inner function $\Theta$ is of order $n$ if $\Theta$ is a finite Blaschke product
of degree $n$, and of order infinity otherwise.

Let $S$ denote  multiplication by $z$ on
$H^2.$  The closed subspaces of $H^2$ invariant  for the adjoint $S^\ast$ of $S$ are precisely of the form $K_\Theta=H^2\ominus \Theta H^2$, with $\Theta$ inner; they are often called \emph{model spaces}. In general we will use the notation $P_Y$ for the orthogonal projection onto a given subspace $Y$ of $L^2$;  we abbreviate $P_{H^2}$ to $P_+$, $P_{(H^2)^\perp}$ to $P_-$, and
$P_{K_\Theta}$ to $P_\Theta$.  The map $f\mapsto J(f)=\Theta \bar z \bar f$ is an anti-isometry of $K_\Theta$ onto itself; in particular, we will use in the proof of Proposition~\ref{pr:Omega_B and K_Theta} the fact that if $f\in K_\Theta$, then $\Theta \bar f\in zH^2$.

For $\phi \in L^2$,  the (bounded or unbounded) \emph{truncated Toeplitz operator} (TTO) $A^\Theta_\phi$ is defined on $K_\Theta$ by $A^\Theta_\phi f=P_\Theta  \phi f$. Note that since $H^\infty\cap K_\Theta$ is dense in $K_\Theta$ (\cite{A}, see also~\cite{CMR}), $A_\phi^\Theta$ always has a dense domain. The function $\phi$ is called a \emph{symbol} of the operator. It is not uniquely defined; more precisely, 
\begin{equation}\label{eq:same symbol}
A_\phi^\Theta = A_\psi^\Theta\quad   \Longleftrightarrow\quad \phi-\psi\in \Theta H^2+ \overline{\Theta H^2}.
\end{equation}

Since tensor products appear quite often in the paper, we will use a different notation for rank one operators, namely
\[
(x\odot y)(\xi)=\<\xi,y\>x.
\]

For  $\lambda\in \bbD$ one defines
\begin{equation}\label{eq:reproducing kernels}
k^\Theta_\lambda(z)=\frac{1-\overline{\Theta(\lambda)}\Theta(z)}{1-\bar\lambda z};\quad \tilde k^\Theta_\lambda(z)= (J(k^\Theta_\lambda))(z)=\frac{\Theta(z)-\Theta(\lambda)}{z-\lambda} .
\end{equation}
The functions $k^\Theta_\lambda$ are the reproducing kernels in $K_\Theta$ for $\lambda\in \bbD$. In the case when the angular derivative of $\Theta$ at $\zeta\in\bbT$ exists, then formulas~\eqref{eq:reproducing kernels}, with $\lambda$ replaced by $\zeta$, define functions in $K_\Theta$,  evaluation in $\zeta$ is a continuous functional on $K_\Theta$, and $k^\Theta_\zeta$ is the corresponding reproducing kernel.

The reproducing kernels produce the rank one TTOs in $K_\Theta$. More precisely, it is shown in ~\cite[Theorem 5.1]{S} that these are exactly the scalar multiples of $\tilde k^\Theta_\lambda\odot k^\Theta_\lambda$, $ k^\Theta_\lambda\odot \tilde k^\Theta_\lambda$ for $\lambda\in \bbD$, and $k^\Theta_\zeta \odot k^\Theta_\zeta$ for the points $\zeta \in \bbT$ where the angular derivative of $\Theta$ exists. We will use below the precise form of some symbols (which appears also in~\cite{S}): namely, $\phi=\bar z \Theta$ is a symbol for $\tilde k^\Theta_0\odot k^\Theta_0$ and, for $\zeta\in\bbT$, $\phi=k^{\Theta}_\zeta + \overline {k^{\Theta}_\zeta}-1$ is a symbol for $k^{\Theta}_\zeta\odot k^{\Theta}_\zeta$.

\begin{lemma}\label{le:if one is bounded then the other is also}
Suppose that $\Theta$ is an inner function and that $\phi\in L^2$. If $A^\Theta_\phi$ is bounded, then $A^\Theta_{z^j\phi}$ is also bounded for any $j\in \bbZ$.
\end{lemma}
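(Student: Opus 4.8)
The plan is to reduce everything to the single case $j=1$ and then bootstrap. First I would observe that it suffices to prove that boundedness of $A^\Theta_\phi$ forces boundedness of $A^\Theta_{z\phi}$. Indeed, the case of a general positive power then follows by iteration, writing $A^\Theta_{z^j\phi}=A^\Theta_{z\cdot(z^{j-1}\phi)}$ and applying the one-step result to the symbol $z^{j-1}\phi$. The negative powers follow by passing to adjoints: if $A^\Theta_\phi$ is bounded then so is $(A^\Theta_\phi)^\ast=A^\Theta_{\bar\phi}$, whence the $j=1$ step applied to $\bar\phi$ gives boundedness of $A^\Theta_{z\bar\phi}$, and taking adjoints once more yields boundedness of $A^\Theta_{\bar z\phi}=(A^\Theta_{z\bar\phi})^\ast$. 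Iterating these two one-step results covers all $j\in\bbZ$.

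The heart of the matter is therefore an operator identity that I would establish on the dense domain $H^\infty\cap K_\Theta$, namely
\[
A^\Theta_{z\phi} = A^\Theta_z A^\Theta_\phi + k^\Theta_0\odot P_\Theta(\bar z\bar\phi).
\]
To prove it, fix $f$ in the domain and decompose $\phi f\in L^2$ along the orthogonal sum $L^2=H^2_-\oplus K_\Theta\oplus \Theta H^2$, writing $\phi f=g+h_1+h_2$ with $g=P_\Theta(\phi f)\in K_\Theta$, $h_1\in\Theta H^2$, and $h_2\in H^2_-$. Then $A^\Theta_{z\phi}f=P_\Theta(z\phi f)$ splits into three pieces. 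The piece coming from $g$ gives exactly $P_\Theta(zg)=A^\Theta_z A^\Theta_\phi f$. Multiplying $h_1=\Theta u$ by $z$ keeps it in $\Theta H^2$, so $P_\Theta(zh_1)=0$. The only surviving contribution is from $h_2$: multiplication by $z$ moves the $z^{-1}$ Fourier coefficient of $h_2$ into the constant term, so $zh_2=\<\phi f,\bar z\>+(\text{term in }H^2_-)$, and hence $P_\Theta(zh_2)=\<\phi f,\bar z\>\,P_\Theta 1=\<\phi f,\bar z\>\,k^\Theta_0$. Finally, rewriting $\<\phi f,\bar z\>=\<f,\bar z\bar\phi\>=\<f,P_\Theta(\bar z\bar\phi)\>$ (legitimate since $f\in K_\Theta$) identifies this last piece as $\bigl(k^\Theta_0\odot P_\Theta(\bar z\bar\phi)\bigr)f$, which is the claimed identity.

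With the identity in hand the conclusion is immediate: $A^\Theta_z$ is a contraction, $A^\Theta_\phi$ is bounded by hypothesis, and $k^\Theta_0\odot P_\Theta(\bar z\bar\phi)$ is a bounded (rank one) operator since $k^\Theta_0$ and $P_\Theta(\bar z\bar\phi)$ are fixed vectors of $K_\Theta$. Thus the right-hand side extends to a bounded operator, forcing $A^\Theta_{z\phi}$ to be bounded. I expect the main obstacle to be purely the bookkeeping in the displayed identity, specifically verifying that after multiplication by $z$ the $\Theta H^2$ part contributes nothing while the $H^2_-$ part contributes only the rank one term; once this is checked, the reduction and the boundedness conclusion are formal.
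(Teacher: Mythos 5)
Your proof is correct. The overall architecture --- reduce to $j=1$, induct for positive powers, pass to adjoints for negative ones --- coincides with the paper's, but the core $j=1$ step is carried out on the other side of the operator. The paper splits the \emph{domain}: it writes $f=P_\Theta^0 f+(I-P_\Theta^0)f$, where $P_\Theta^0$ projects onto $K_\Theta\ominus\bbC\tilde k^\Theta_0$, uses the fact that multiplication by $z$ maps that subspace back into $K_\Theta$ (so that $P_\Theta(z\phi P_\Theta^0 f)=A^\Theta_\phi(zP_\Theta^0 f)$ is controlled by the given bound), and observes that the remaining piece factors through the rank one projection onto $\bbC\tilde k^\Theta_0$. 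You instead split the \emph{image} $\phi f$ along $L^2=H^2_-\oplus K_\Theta\oplus\Theta H^2$ and track the effect of multiplication by $z$ on each summand, arriving at the explicit identity $A^\Theta_{z\phi}=A^\Theta_z A^\Theta_\phi+k^\Theta_0\odot P_\Theta(\bar z\bar\phi)$ on the dense domain. The two arguments are essentially dual and of comparable length; yours has the advantage of producing a clean closed-form commutation-type relation (with the error term exhibited as an explicit rank one operator), while the paper's keeps the possibly unbounded symbol attached to the single compression $A^\Theta_\phi$ throughout and so needs no Fourier-coefficient bookkeeping. All the individual verifications in your argument are sound: $zh_1$ stays in $\Theta H^2$; $P_\Theta(zh_2)=\<\phi f,\bar z\>k^\Theta_0$ because $P_\Theta 1=k^\Theta_0$ and the $H^2$ components of $\phi f$ are orthogonal to $\bar z$; and $\<\phi f,\bar z\>=\<f,P_\Theta(\bar z\bar\phi)\>$ for $f\in K_\Theta$.
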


\begin{proof} Suppose first that $j=1$.
Denote by $P_\Theta^0$ the projection onto $K_\Theta\ominus \bbC \tilde k^\Theta_0$. Take $f\in K_\Theta$. We have
\begin{equation}\label{eq:11}
A^\Theta_{z\phi} f= A^\Theta_{z\phi}(P_\Theta^0 f + (I-P_\Theta^0)f)=
P_\Theta(z\phi P_\Theta^0 f )+P_\Theta (z\phi (I-P_\Theta^0)f).
\end{equation}

One checks easily that, if $g\in K_\Theta$ and $g\perp \tilde k^\Theta_0$, then $zg\in K_\Theta$.
Therefore $zP^0_\Theta f\in K_\Theta$, and thus
\[
P_\Theta (z\phi P_\Theta^0 f) =P_\Theta\phi (z P_\Theta^0 f) =A^\Theta_\phi (zP_\Theta^0 f).
\]
Since
$A^\Theta_\phi$ is bounded by assumption, $ P_\Theta z\phi P_\Theta^0$ is bounded.
On the other hand, $(I-P_\Theta^0)$, the projection of rank~1 onto $\bbC \tilde k^\Theta_0 $, has the formula $h^\Theta_0 \odot h^\Theta_0$, where we have denoted $h^\Theta_0=\frac{\tilde k^\Theta_0}{\|\tilde k^\Theta_0\|}$.  Since $z\phi  h^\Theta_0\in L^2$, $z\phi  h^\Theta_0\odot h^\Theta_0$ is a bounded rank one operator, and we may write
\[
P_\Theta z\phi (I-P_\Theta^0)f=
P_\Theta z\phi \<f, h^\Theta_0\> h^\Theta_0=
P_\Theta\left( z\phi  h^\Theta_0\odot h^\Theta_0\right) f.
\]
It follows then from~\eqref{eq:11} that $A^\Theta_{z\phi}$ is bounded.

The statement is thus proved for $j=1$. It is obvious that we may apply induction to obtain $A^\Theta_{z^j \phi}$  bounded for any $j\in\bbN$.
But if $A^\Theta_\phi$ is bounded then $A^\Theta_{\bar\phi}=(A^\Theta_\phi)^*$ is  bounded, and therefore also $A^\Theta_{z^j\phi}=(A^\Theta_{z^{-j}\bar\phi})^*$ for $j\le 0$. The lemma is proved.
\end{proof}

The result of the next proposition is implicitly contained in the proof of~\cite[Theorem 1]{Co}.

\begin{proposition}\label{pr:Omega_B}
Suppose $B$ is an inner function. Then the formula
\[
 h\otimes f\mapsto h(f\circ B)
\]
can be extended linearly to define a unitary operator $\Omega_B$ from $K_B\otimes L^2$ onto $L^2$. The operator $\Omega_B$ maps $K_B\otimes H^2$ onto $H^2$.
\end{proposition}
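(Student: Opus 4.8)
The plan is to reduce everything to the orthogonal decomposition of $L^2$ induced by the inner function $B$, namely
\[
L^2=\bigoplus_{n\in\bbZ}B^nK_B,\qquad H^2=\bigoplus_{n\ge 0}B^nK_B ,
\]
and to recognize $\Omega_B$ as the map carrying the orthonormal ``Fourier layers'' $K_B\otimes\bbC z^n$ of $K_B\otimes L^2$ onto the layers $B^nK_B$ of $L^2$. (Here $B$ is assumed non-constant; otherwise $K_B=\{0\}$ and the statement is vacuous.) Concretely, I would first evaluate the formula on the elementary tensors $h\otimes z^n$: since $|B|=1$ a.e.\ on $\bbT$ we have $z^n\circ B=B^n$, so $\Omega_B(h\otimes z^n)=B^nh$. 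Because $\{z^n\}_{n\in\bbZ}$ is an orthonormal basis of $L^2$, the vectors $\{h\otimes z^n\}$ span $K_B\otimes L^2$, and it suffices to check that $\Omega_B$ preserves inner products on them and has dense range; the general composition $f\circ B$ need not be addressed directly.

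The isometry property is the clean part. By sesquilinearity it reduces to proving, for $h,h'\in K_B$ and $n,m\in\bbZ$, that
\[
\<B^nh,\,B^mh'\>=\<h,h'\>\,\delta_{nm}.
\]
For $n=m$ this is immediate from $|B|=1$. For $n\ne m$, say $n>m$, one has $\<B^nh,B^mh'\>=\<B^{n-m}h,h'\>$, and since $B^{n-m}h\in BH^2$ while $h'\in K_B=H^2\ominus BH^2$, this inner product vanishes. Hence $\Omega_B$ extends to an isometry of $K_B\otimes L^2$ into $L^2$.

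The main obstacle, and the only step with genuine content, is surjectivity, i.e.\ the totality of $\{B^nK_B:n\in\bbZ\}$ in $L^2$. For the nonnegative layers I would recall the Wold-type decomposition $H^2=\bigoplus_{n\ge0}B^nK_B$, obtained by iterating $H^2=K_B\oplus BH^2$ together with $\bigcap_n B^nH^2=\{0\}$, which holds for non-constant inner $B$. For the remaining layers I would show $\overline{\bigcup_{k\ge0}\bar B^kH^2}=L^2$: the subspaces $\bar B^kH^2$ increase with $k$, their orthogonal complements $\bar B^kH^2_-$ decrease, and $\bigcap_k\bar B^kH^2_-=\{0\}$, which follows by conjugating (using $\overline{H^2_-}=zH^2$) and invoking $\bigcap_k B^kH^2=\{0\}$ once more. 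Since each $\bar B^kH^2$ lies in the closed span of $\{B^nK_B\}$, this yields the decomposition of $L^2$ and hence that $\Omega_B$ is onto.

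Finally, the assertion about $H^2$ is immediate from the layered picture: $K_B\otimes H^2$ is the closed span of the tensors $h\otimes z^n$ with $n\ge0$, which $\Omega_B$ sends to $B^nh$ ($n\ge0$), whose closed span is exactly $\bigoplus_{n\ge0}B^nK_B=H^2$. I expect the isometry and the $H^2$ claim to be short, with essentially all the work concentrated in establishing the two Wold-type identities underlying surjectivity.
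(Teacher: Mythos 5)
Your argument is correct and follows essentially the same route as the paper: check orthonormality of the images $B^nh$ (splitting into the cases $n=m$ and $n\ne m$ exactly as the paper does), then read off surjectivity and the $H^2$ statement from the decomposition $L^2=\bigoplus_{n\in\bbZ}B^nK_B$. The only difference is that the paper simply invokes this decomposition as known, whereas you supply the Wold-type argument (iterating $H^2=K_B\oplus BH^2$ and showing $\bigcap_k \bar B^kH^2_-=\{0\}$) that justifies it; that extra detail is sound.
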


\begin{proof} 
 To show that the given map is isometric, it is enough to check that it maps an orthonormal basis of $K_B\otimes L^2$ to an orthonormal family of $L^2$. Let  $(h_i)_{i\in I}$
     be an orthonormal basis for $K_B$; then  $(h_i \otimes z^j)_{i\in I, j\in\bbZ}$ is a basis for $K_B\otimes
 L^2$, and we have to show that
\[
 \<h_i B^j, h_{i'} B^{j'}\>=\delta_{i,i'}\delta_{j,j'}.
\]
If $j = j^{'}$, then 
\[
 \<h_i B^j, h_{i'} B^{j}\>=\delta_{i,i'}\]
since multiplication by an inner function is isometric. On the other hand, if $j\not=j'$, assume, for instance, that
 $j<j^{'}$; then
 \[
 \<h_k B^j, h_{k'} B^{j'}\>= \<h_k, h_{k'} B^{j'-j}\>
\]
which equals zero since the first term is in $K_B$, while the second
is in $BH^2$.

We have $\Omega_B(K_B\otimes \bbC z^j)=B^j K_B$. Since $L^2=\bigoplus_{j\in \bbZ}B^j K_B$, it follows that $\Omega_B$ is onto and thus unitary. Also, $H^2=\bigoplus_{j\in \bbN}B^j K_B$, and thus $\Omega_B$ maps $K_B\otimes H^2$ precisely onto $H^2$. 
\end{proof}

We will now consider  a second inner function $\Theta$. Remarkably, the unitary operator of Proposition~\ref{pr:Omega_B} behaves well with respect to $K_\Theta$.

\begin{proposition}\label{pr:Omega_B and K_Theta}
Suppose that $B$ and  $\Theta$ are inner functions, while $\Omega_B$ is the unitary defined in Proposition~\ref{pr:Omega_B}. Then 
\[
\Omega_B(K_B\otimes \Theta H^2)=(\Theta\circ B)H^2, \qquad  \Omega_B(K_B\otimes K_\Theta)=K_{\Theta\circ B}.
\]
\end{proposition}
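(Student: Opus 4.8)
The plan is to reduce both identities to the single algebraic fact that $\Omega_B$ intertwines multiplication by $\Theta$ in the second tensor factor with multiplication by $\Theta\circ B$ on $L^2$, and then to deduce the second identity from the first by a purely Hilbert-space complementation argument. First I would record that $\Theta\circ B$ is again inner (a classical fact, so that $K_{\Theta\circ B}$ and the closed subspace $(\Theta\circ B)H^2$ are meaningful), and that Proposition~\ref{pr:Omega_B} already gives $\Omega_B(K_B\otimes H^2)=H^2$.

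For the first identity I would compute $\Omega_B$ on an elementary tensor $h\otimes \Theta f$ with $h\in K_B$ and $f\in H^2$. Since composition with the analytic self-map $B$ is multiplicative on analytic functions, one has $(\Theta f)\circ B=(\Theta\circ B)(f\circ B)$, whence
\[
\Omega_B(h\otimes \Theta f)=h\cdot\big((\Theta f)\circ B\big)=(\Theta\circ B)\,h\,(f\circ B)=(\Theta\circ B)\,\Omega_B(h\otimes f).
\]
Extending by linearity and continuity, this says $\Omega_B\circ(I_{K_B}\otimes M_\Theta)=M_{\Theta\circ B}\circ\Omega_B$ as maps $K_B\otimes H^2\to H^2$, where $M_\Theta$ denotes multiplication by $\Theta$. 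Because $M_\Theta$ is an isometry with closed range $\Theta H^2$, the subspace $K_B\otimes\Theta H^2$ is exactly the range of $I_{K_B}\otimes M_\Theta$, and applying the intertwining relation together with $\Omega_B(K_B\otimes H^2)=H^2$ yields $\Omega_B(K_B\otimes\Theta H^2)=(\Theta\circ B)H^2$.

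For the second identity I would use the orthogonal decomposition $H^2=K_\Theta\oplus\Theta H^2$, which upon tensoring with $K_B$ gives $K_B\otimes H^2=(K_B\otimes K_\Theta)\oplus(K_B\otimes\Theta H^2)$. Since $\Omega_B$ is unitary it carries this orthogonal decomposition to an orthogonal decomposition of $H^2=\Omega_B(K_B\otimes H^2)$; combining with the first identity,
\[
\Omega_B(K_B\otimes K_\Theta)=H^2\ominus(\Theta\circ B)H^2=K_{\Theta\circ B}.
\]
I expect the only genuinely delicate point to be the multiplicativity step $(\Theta f)\circ B=(\Theta\circ B)(f\circ B)$ read as an identity of boundary functions in $L^2$: the cleanest justification is to verify it first as an identity of analytic functions on $\bbD$ and then pass to radial limits. (An alternative route to the second identity, avoiding complementation, is to check directly that $h\,(f\circ B)\perp(\Theta\circ B)H^2$ for $f\in K_\Theta$; there one would invoke $\Theta\bar f=zJ(f)\in zH^2$, as noted before the statement, but the complementation argument seems shorter.)
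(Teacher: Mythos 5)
Your proof is correct, and it diverges from the paper's in how the second identity is obtained. The paper proves only the two \emph{inclusions} $\Omega_B(K_B\otimes \Theta H^2)\subset(\Theta\circ B)H^2$ and $\Omega_B(K_B\otimes K_\Theta)\subset K_{\Theta\circ B}$ --- the first by the same multiplicativity computation you use, the second by a direct orthogonality check $\<h(f\circ B),(\Theta\circ B)z^k\>=\<S^{*k}h,(\Theta\bar f)\circ B\>=0$, which rests on the fact that $\Theta\bar f\in zH^2$ for $f\in K_\Theta$ --- and then upgrades both inclusions to equalities at once using unitarity and $H^2=(\Theta\circ B)H^2\oplus K_{\Theta\circ B}$. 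You instead exploit the surjectivity statement of Proposition~\ref{pr:Omega_B} more fully: writing the first computation as the intertwining relation $\Omega_B(I_{K_B}\otimes M_\Theta)=M_{\Theta\circ B}\Omega_B$ and noting that $I_{K_B}\otimes M_\Theta$ has range exactly $K_B\otimes\Theta H^2$ while $M_{\Theta\circ B}$ maps $H^2$ onto $(\Theta\circ B)H^2$, you get the first identity as an honest equality immediately, and the second then falls out by taking orthocomplements inside $H^2$. This spares you the second direct computation (and the use of the conjugation fact $\Theta\bar f\in zH^2$), at the cost of the small functional-analytic observations that $I_{K_B}\otimes M_\Theta$ has closed range equal to $K_B\otimes\Theta H^2$ and that $(\Theta f)\circ B=(\Theta\circ B)(f\circ B)$ persists at the level of boundary functions --- both of which you flag and which are sound. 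The paper's route is more self-contained and symmetric in the two subspaces; yours is slightly shorter and isolates the single structural fact (the intertwining of $M_\Theta$ with $M_{\Theta\circ B}$) from which everything follows.
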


\begin{proof}
It is enough to prove the inclusions  $\Omega_B(K_B\otimes \Theta H^2)\subset(\Theta\circ B)H^2$ and $ \Omega_B(K_B\otimes K_\Theta)\subset K_{\Theta\circ B}$, since $\Omega_B$ is unitary and $H^2= (\Theta\circ B)H^2\oplus K_{\Theta\circ B}$.

If we fix $h\in K_B$ and $j\in\bbN$, we have
\[
 \Omega_B(h\otimes \Theta z^j)=h(\Theta\circ B)B^j\in (\Theta\circ B)H^2, 
\]
 and thus the first inclusion is true.

For the second inclusion, let $f$ be another element of $ K_\Theta$. We need to see that
\[
 \<h (f\circ B), (\Theta\circ B)z^k\>=0
\]
for all $k\ge 0$. This equality can be written as
\[
 \< S^*{}^k h, (\Theta \bar f)\circ B\>=0,
\]
where $S$ is the shift operator on $H^2$ ($Sf=zf$). But, if $f\in K_\Theta$, then $\Theta\bar f\in zH^2$, and therefore $(\Theta \bar f)\circ B\in BH^2$. Since $h\in K_B$ and $S^* K_B\subset K_B$, the scalar product above is indeed null.
\end{proof}

We will denote by $\omega_B:K_B\otimes K_\Theta\to K_{\Theta\circ B}$ the restriction of $\Omega_B$ to $K_B\otimes K_\Theta$.

\section{Truncated Toeplitz operators}

We start with a lemma about the action of certain multiplications.

\begin{lemma}\label{le:general intertwining}
Suppose that $B$ is an inner function,
$\psi,  \phi\in L^2$, $h\in K_B\cap H^\infty$, $f\in L^\infty$, and that the  operators $ A^B_{{\bar B}^j\psi}$ are nonzero  only for a finite number of $j\in\bbZ$.
Then
\[
\psi(\phi\circ B)\Omega_B (h\otimes f)= \Omega_B
\big(\sum_j (A^B_{{\bar B}^j\psi}h\otimes z^j\phi f)\big).
\]
\end{lemma}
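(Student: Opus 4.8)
The plan is to reduce everything to a computation on the orthonormal-basis vectors that generate $K_B\otimes L^2$, exploiting the bilinearity (and continuity) of both sides in the tensor factor $h\otimes f$. Since the stated identity is $\bbC$-linear in the pair $(h,f)$ and both sides depend continuously on $f\in L^2$, it suffices to verify it for $f=z^m$ a single Fourier monomial, with $h\in K_B\cap H^\infty$ fixed. With $f=z^m$ the right-hand side argument becomes $\sum_j A^B_{\bar B^j\psi}h\otimes z^{j+m}\phi$, and I would apply the definition of $\Omega_B$ term by term.

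The heart of the matter is to expand the symbol $\psi$ with respect to the orthogonal decomposition $L^2=\bigoplus_{j\in\bbZ}B^jK_B$ from Proposition~\ref{pr:Omega_B}. First I would write, on the level of symbols,
\[
\psi=\sum_{j\in\bbZ}B^j g_j,\qquad g_j\in K_B,
\]
and identify $g_j$ in terms of the truncated Toeplitz data: the point is that $g_j=A^B_{\bar B^j\psi}(\cdot)$ acting on the constant-like generator, or more precisely that $P_B(\bar B^j\psi)=A^B_{\bar B^j\psi}\mathbf 1_{K_B}$ picks out the $B^jK_B$-component. The finiteness hypothesis on the operators $A^B_{\bar B^j\psi}$ guarantees that this expansion has only finitely many nonzero terms, so all the sums below are finite and no convergence issue intervenes. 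Then for the left-hand side I would compute
\[
\psi(\phi\circ B)\,\Omega_B(h\otimes z^m)=\psi(\phi\circ B)\,h\,B^m,
\]
and the task is to recognize this product as $\Omega_B$ applied to a tensor.

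The key algebraic step is the multiplication rule $g_j\cdot h$ for $g_j,h\in K_B$: writing $g_j h=B^{?}(\text{something in }K_B)$ is not automatic, but composing with $B$ and tracking the grading shows that the operator $f\mapsto A^B_{\bar B^j\psi}h$ is exactly what records the $B$-degree-$j$ part of the product $\psi h$ relative to the $K_B$-fibration. Concretely, I expect to verify that for each $j$ the contribution of $B^jg_j$ to $\psi h B^m$ lands in $B^{j+m}K_B$ and equals $\Omega_B\big((A^B_{\bar B^j\psi}h)\otimes z^{j+m}\big)$; summing over $j$ and using $\Omega_B(u\otimes z^{j+m}\phi)=(A^B_{\bar B^j\psi}h)(\phi\circ B)B^{j+m}$ together with $(\phi\circ B)$ being common to all terms then reassembles the right-hand side. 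The main obstacle will be handling the product $\psi h$ cleanly: multiplication does not preserve $K_B$, so I must carefully project back using $P_B$ and verify that the ``leakage'' into other graded pieces $B^iK_B$ is precisely accounted for by the operators $A^B_{\bar B^j\psi}$ for the various $j$, i.e. that the bookkeeping of indices matches. Once the monomial case $f=z^m$ is established, linearity in $f$ and density extend it to all $f\in L^\infty$, and bilinearity in $h$ completes the proof.
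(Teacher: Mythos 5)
Your overall framework (the grading $L^2=\bigoplus_{j\in\bbZ}B^jK_B$ and the unitary $\Omega_B$) is the right one, but the plan has a genuine gap at its center: you decompose the symbol $\psi$ as $\sum_j B^jg_j$ with $g_j\in K_B$ and assert that the finiteness hypothesis on the operators $A^B_{\bar B^j\psi}$ forces this expansion to be finite. That implication is false. The vanishing of $A^B_{\bar B^j\psi}$ means $\bar B^j\psi\in BH^2+\overline{BH^2}$, which is weaker than the vanishing of the graded component $P_{B^jK_B}\psi=B^jP_{K_B}(\bar B^j\psi)$. Concretely, take $\psi=\bar B$ with $B(0)\neq0$: then $A^B_{\bar B^j\bar B}=A^B_{\bar B^{j+1}}$ vanishes for every $j\neq-1$, yet $P_{B^jK_B}\bar B=\overline{B(0)}^{\,j+1}B^jk^B_0\neq0$ for every $j\geq0$, so $\psi$ has infinitely many nonzero graded components. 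Moreover, the step you explicitly defer --- tracking how the products $g_jh$ ``leak'' across the graded pieces and showing the bookkeeping reproduces the operators $A^B_{\bar B^j\psi}$ --- is precisely the content of the lemma, and it is not routine starting from an expansion of $\psi$ alone.

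Both problems disappear if you decompose the \emph{product} $\psi h$ rather than $\psi$: since $h\in K_B$, the definition of a truncated Toeplitz operator gives directly
\[
P_{B^jK_B}(\psi h)=B^jP_{K_B}(\bar B^j\psi h)=B^jA^B_{\bar B^j\psi}h,
\]
so the hypothesis makes the orthogonal expansion $\psi h=\sum_jP_{B^jK_B}(\psi h)$ a finite sum with no leakage to account for; multiplying by $(\phi\circ B)(f\circ B)$ and recognizing $B^jg\,(\phi\circ B)(f\circ B)=\Omega_B(g\otimes z^j\phi f)$ for $g\in K_B$ finishes the proof. This is what the paper does. A secondary remark: your reduction to $f=z^m$ is both unnecessary and not justified as stated, since neither side is continuous in $f$ with respect to the $L^2$ norm when $\phi,\psi$ are merely in $L^2$ (multiplication by an $L^2$ function is not $L^2$-bounded); if you wanted such a reduction you would need, e.g., Ces\`aro means and dominated convergence, but with the decomposition of $\psi h$ no reduction is needed at all.
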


\begin{proof}
We have $\psi(\phi\circ B)\Omega_B (h\otimes f)=\psi (\phi\circ B)h(f\circ B)$.
The assumptions imply that $\psi h\in L^2=\bigoplus_{j\in \bbZ}B^j K_B$. Since 
\[
P_{B^jK_B}(\psi h)=B^jP_{K_B}\bar B^j \psi h= B^j A^B_{{\bar B}^j\psi}h,
\]
and $ A^B_{{\bar B}^j\psi}\not=0$ only for a finite number of $j$, it follows that in the orthogonal decomposition $\psi h=\sum_{j}P_{B^jK_B}(\psi h)$ the sum is finite. Thus, we can write
\[
\begin{split}
\psi h  (\phi\circ B)(f\circ B)&=\left(\sum_{j}P_{B^jK_B}(\psi h)\right)(\phi\circ B)(f\circ B)\\
&=\sum_{j} B^jP_{B}({\bar B}^j\psi h)(\phi\circ B)(f\circ B)\\
&=\sum_{j} \Omega_B \left(A^B_{{\bar B}^j\psi}h\otimes z^j\phi f\right),
\end{split}
\]
which finishes the proof.
\end{proof}

\begin{theorem}\label{th:main}
 Let $B$ and $\Theta$ be inner functions, and suppose that
 $\psi,\phi\in L^2$ are subjected to the following  conditions:
 
(a) The operators $ A^B_{{\bar B}^j\psi}$ are bounded and nonzero only for a finite number of $j\in\bbZ$.

(b) $ A^\Theta_{\phi}$ is bounded.

(c) $\psi(\phi\circ B)\in L^2$.

Then $A^{\Theta\circ B}_{\psi(\phi\circ B)}$ is bounded, and
\begin{equation}\label{eq:main}
 A^{\Theta\circ B}_{\psi( \phi\circ B)}\omega_B=
 \omega_B \left( \sum_j (A^B_{{\bar B^j}\psi}\otimes A^\Theta_{z^j\phi}) \right).
\end{equation}
\end{theorem}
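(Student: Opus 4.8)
The plan is to conjugate $A^{\Theta\circ B}_{\psi(\phi\circ B)}$ by the unitary $\omega_B$ and to recognize the result as the finite sum appearing on the right of~\eqref{eq:main}. Since the whole matter reduces to a computation on elementary tensors, I would first settle boundedness of the candidate operator: by Lemma~\ref{le:if one is bounded then the other is also}, condition (b) forces each $A^\Theta_{z^j\phi}$ to be bounded, while condition (a) makes the sum finite with each factor $A^B_{{\bar B}^j\psi}$ bounded. Hence $L:=\sum_j A^B_{{\bar B}^j\psi}\otimes A^\Theta_{z^j\phi}$ is a bounded operator on $K_B\otimes K_\Theta$, and $T:=\omega_B L\,\omega_B^\ast$ is bounded on $K_{\Theta\circ B}$.

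The second ingredient is a commutation relation for the projection. From Proposition~\ref{pr:Omega_B} the space $L^2$ splits as $\Omega_B(K_B\otimes H^2)\oplus\Omega_B(K_B\otimes H^2_-)=H^2\oplus H^2_-$, and from Proposition~\ref{pr:Omega_B and K_Theta} the subspace $H^2$ splits further as $\Omega_B(K_B\otimes K_\Theta)\oplus\Omega_B(K_B\otimes \Theta H^2)=K_{\Theta\circ B}\oplus(\Theta\circ B)H^2$. Matching these decompositions shows that $\Omega_B^\ast P_{\Theta\circ B}\Omega_B=I_{K_B}\otimes P_\Theta$, i.e.\ the projection onto the new model space corresponds on the tensor side to projecting only in the second variable.

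Next I would verify~\eqref{eq:main} on the dense set of tensors $h\otimes f$ with $h\in K_B\cap H^\infty$ and $f\in K_\Theta\cap H^\infty$. For such a tensor $\omega_B(h\otimes f)=h(f\circ B)$ lies in $H^\infty\cap K_{\Theta\circ B}$, hence in the domain of $A^{\Theta\circ B}_{\psi(\phi\circ B)}$, so that $A^{\Theta\circ B}_{\psi(\phi\circ B)}\omega_B(h\otimes f)=P_{\Theta\circ B}[\psi(\phi\circ B)\,h(f\circ B)]$. Rewriting the product by Lemma~\ref{le:general intertwining}, pushing $P_{\Theta\circ B}$ through $\Omega_B$ by the commutation relation above, and using $P_\Theta(z^j\phi f)=A^\Theta_{z^j\phi}f$, the right-hand side collapses to $\omega_B\big(\sum_j A^B_{{\bar B}^j\psi}h\otimes A^\Theta_{z^j\phi}f\big)=\omega_B L(h\otimes f)$. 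By linearity this yields $A^{\Theta\circ B}_{\psi(\phi\circ B)}\omega_B=\omega_B L$ on the dense span $\mathcal D$ of such tensors.

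The last and most delicate point is to pass from this identity on $\mathcal D$ to genuine boundedness of $A^{\Theta\circ B}_{\psi(\phi\circ B)}$ on its whole domain; this is where I expect the real work to lie, since a priori a truncated Toeplitz operator is only densely defined. I would resolve it using closedness of the maximal truncated Toeplitz operator: a closed operator extending the densely defined bounded map $T|_{\mathcal D}$ must contain its closure $T$, which forces the domain to be all of $K_{\Theta\circ B}$ and the operator to equal $T$. Alternatively one argues by symmetry: the conjugate symbol $\overline{\psi(\phi\circ B)}=\bar\psi\,(\bar\phi\circ B)$ satisfies the same hypotheses (a)--(c), so $A^{\Theta\circ B}_{\overline{\psi(\phi\circ B)}}$ agrees with $T^\ast$ on $\mathcal D$; testing $\langle A^{\Theta\circ B}_{\psi(\phi\circ B)}g,u\rangle$ against $u\in\mathcal D$ and moving the symbol across the inner product then gives $A^{\Theta\circ B}_{\psi(\phi\circ B)}g=Tg$ for every $g$ in the domain. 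Either way $A^{\Theta\circ B}_{\psi(\phi\circ B)}$ is bounded and equals $T=\omega_B L\,\omega_B^\ast$, which is precisely~\eqref{eq:main}.
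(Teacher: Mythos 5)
Your proposal is correct and follows essentially the same route as the paper: conjugate by $\omega_B$, identify $P_{\Theta\circ B}$ with $\Omega_B(I_{K_B}\otimes P_\Theta)\Omega_B^*$ via Proposition~\ref{pr:Omega_B and K_Theta}, and verify~\eqref{eq:main} on elementary tensors $h\otimes f$ with $h\in K_B\cap H^\infty$, $f\in K_\Theta\cap H^\infty$ using Lemma~\ref{le:general intertwining} and Lemma~\ref{le:if one is bounded then the other is also}. Your final paragraph, passing from the identity on the span of such tensors to boundedness on the full domain via the adjoint-symbol argument, is a point the paper's proof leaves implicit, and your treatment of it is sound.
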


\begin{proof}
Let us note first that, since $\Omega_B(K_B\otimes K_\Theta)=K_{\Theta\circ B}$, we have 
\[
P_{\Theta\circ B}=\Omega_B (P_B\otimes P_\Theta)\Omega_B^*.
\]
Let $h\in K_B\cap H^\infty$, $f\in K_\Theta\cap H^\infty$. We have, using Lemma~\ref{le:general intertwining},
\begin{align*}
A^{\Theta\circ B}_{\psi( \phi\circ B)}\omega_B (h\otimes f)&
=P_{\Theta\circ B} \psi(\phi\circ B) \Omega_B (h\otimes f)
=\Omega_B (P_B\otimes P_\Theta)\Omega_B^*\psi(\phi\circ B) \Omega_B    (h\otimes f) \\
&=\Omega_B (P_B\otimes P_\Theta) \big(\sum_j (A^B_{{\bar B}^j\psi}h\otimes z^j\phi f)\big)\\
&=\omega_B  \big(\sum_j (A^B_{{\bar B}^j\psi}h\otimes A^\Theta_{z^j\phi} f)\big).
\end{align*}
By Lemma~\ref{le:if one is bounded then the other is also}, $A^\Theta_{z^j\phi}$ is bounded for any~$j$. Therefore the operator on the right hand side of the last equation is bounded by assumption; it follows that $A^{\Theta\circ B}_{\psi(\phi\circ B)}$ is also bounded, and~\eqref{eq:main} is true. The theorem is proved.
 \end{proof}

\begin{remark}\label{re:sufficient conditions}

\begin{enumerate}
\item If $\psi\in B^k K_B$ for some $k\in \bbZ$, then $A^B_{{\bar B}^j\psi}=0$ for $j\not=k, k+1$, while $\psi(\phi\circ B)\in L^2$ by  Proposition~\ref{pr:Omega_B}. Therefore, if   $\psi\in \oplus_{j\in J}B^j K_B$ for a finite set $J$, then $ A^B_{{\bar B}^j\psi}$ is nonzero only for a finite number of $j\in\bbZ$, while $\psi(\phi\circ B)\in L^2$. Thus part of the hypotheses of Theorem~\ref{th:main} are automatically satisfied.

\item The boundedness conditions in the statement of Theorem~\ref{th:main} are immediately satisfied if $\phi,\psi\in H^\infty$. It is known however~\cite{BBK, BCFMT} that there exist bounded TTOs that have no bounded symbols, and therefore such an assumption would reduce the generality of the result. In~\cite{BBK} one characterizes precisely the inner functions $\Theta$ that have the property that any bounded TTO on $K_\Theta$ has a bounded symbol.
\end{enumerate}

\end{remark}

We will discuss in the next section several applications of Theorem~\ref{th:main}; but first we give some simple but important consequences. 
 
\begin{corollary}\label{co:immediate corollary}
Suppose that $\Theta$ is an inner function, that $\phi\in L^2$, and that $B$ is an inner function of order $k$ for some $k=1,2\dots,\infty$. Assume also that $A^\Theta_\phi$ is bounded.
Then $A^{\Theta\circ B}_{ \phi\circ B}$ is bounded and unitarily equivalent to  $I_k\otimes A^\Theta_\phi$.
\end{corollary}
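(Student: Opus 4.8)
The plan is to derive Corollary~\ref{co:immediate corollary} as a direct specialization of Theorem~\ref{th:main}, choosing $\psi$ so that the right-hand side collapses to an inflation. The natural choice is $\psi=1$, which I expect to reduce the general intertwining formula to something controlled by a single index~$j$. First I would verify hypothesis (a): with $\psi=1$, the operator $A^B_{\bar B^j\cdot 1}=A^B_{\bar B^j}$ must be computed. For $j=0$ we get $A^B_{1}=I_{K_B}$; for $j>0$ the symbol $\bar B^j$ lies in $\overline{BH^2}\subset\overline{BH^2}+BH^2$, so by the symbol-equivalence~\eqref{eq:same symbol} it represents the zero operator, i.e. $A^B_{\bar B^j}=0$; and for $j<0$ the symbol $\bar B^j=B^{|j|}\in BH^2$ again gives the zero operator. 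Thus $A^B_{\bar B^j\psi}$ is nonzero only for $j=0$, where it equals the identity, so condition (a) holds (it is in fact the $k=0$ case of Remark~\ref{re:sufficient conditions}(1), since $\psi=1\in K_B$ for the decomposition $L^2=\bigoplus_j B^jK_B$).

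Next I would check the remaining hypotheses and read off the conclusion. Hypothesis (b), boundedness of $A^\Theta_\phi$, is assumed in the statement. Hypothesis (c), $\psi(\phi\circ B)=\phi\circ B\in L^2$, follows from Proposition~\ref{pr:Omega_B} (composition with an inner function preserves $L^2$), and is again covered by Remark~\ref{re:sufficient conditions}(1). With these verified, Theorem~\ref{th:main} immediately gives that $A^{\Theta\circ B}_{\phi\circ B}$ is bounded and that
\[
A^{\Theta\circ B}_{\phi\circ B}\,\omega_B=\omega_B\big(I_{K_B}\otimes A^\Theta_{\phi}\big),
\]
since only the $j=0$ term survives and $A^B_{\bar B^0\psi}=A^B_1=I_{K_B}$. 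Because $\omega_B:K_B\otimes K_\Theta\to K_{\Theta\circ B}$ is a unitary operator (the restriction of $\Omega_B$, per Proposition~\ref{pr:Omega_B and K_Theta}), this intertwining relation is exactly a statement of unitary equivalence.

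Finally, I would identify $I_{K_B}\otimes A^\Theta_\phi$ with $I_k\otimes A^\Theta_\phi$. Since $B$ has order $k$, its model space $K_B$ has dimension $k$ (finite Blaschke product of degree $k$ giving $\dim K_B=k$, or $\dim K_B=\infty$ when $k=\infty$), so $I_{K_B}$ is just the identity on a $k$-dimensional space and the tensor factor is the $k$-fold inflation. I do not anticipate a genuine obstacle here: the whole argument is a matter of correctly evaluating $A^B_{\bar B^j}$ for the constant symbol and invoking~\eqref{eq:same symbol}. The only point requiring a moment's care is confirming that \emph{all} $j\neq 0$ drop out — that both the positive and negative powers of $B$ yield symbols in $\Theta_B H^2+\overline{\Theta_B H^2}$ (with $\Theta_B=B$) and hence trivial operators — but this is the same computation already used implicitly in Remark~\ref{re:sufficient conditions}(1).
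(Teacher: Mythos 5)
Your proof is correct and follows essentially the same route as the paper: take $\psi\equiv 1$, observe that $A^B_{\bar B^j}=0$ for $j\neq 0$ by~\eqref{eq:same symbol}, and apply Theorem~\ref{th:main} so that only the $j=0$ term survives, the paper justifying (c) by Littlewood's Subordination Theorem just as you do in substance. The only quibble is your parenthetical claim that $\psi=1\in K_B$, which holds only when $B(0)=0$; but your argument does not actually rely on it, since you verify condition (a) directly from the symbol equivalence.
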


\begin{proof}
Take $\psi\equiv1$. We have $A^B_\psi=I_{K_B}$ and $A^B_{\bar B^j\psi}=0$ for $j\not=0$, whence  (a) in Theorem~\ref{th:main} is satisfied. (b) is satisfied by assumption, while (c) is true since $\phi\circ B\in L^2$ by Littlewood's Subordination Theorem. Therefore $A^{\Theta\circ B}_{ \phi\circ B}$ is bounded, and~\eqref{eq:main} becomes
\begin{equation*}
A^{\Theta\circ B}_{ \phi\circ B}\omega_B=\omega_B (I_{K_B}\otimes A^\Theta_\phi),
\end{equation*}
which proves the corollary.
\end{proof}

For any $\Theta$ and $k$, if we take $B$ to be an inner function  of order $k$, then  Corollary~\ref{co:immediate corollary} answers Question 5.10 of~\cite{CGRW}. 

The next immediate consequence is the analogue for TTOs of a result for usual Toeplitz operators from the paper~\cite{Co} mentioned in the introduction.

\begin{corollary}\label{co:same order}
Suppose that $\Theta$ is an inner function and that $\phi\in L^2$, such that $A^\Theta_\phi$ is bounded.
If $B_1, B_2$ are two inner functions of the same order, then $A^{\Theta\circ B_1}_{ \phi\circ B_1}$ and $A^{\Theta\circ B_2}_{ \phi\circ B_2}$ are bounded and unitarily equivalent.
\end{corollary}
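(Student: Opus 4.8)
The plan is to deduce Corollary~\ref{co:same order} directly from Corollary~\ref{co:immediate corollary} by a transitivity argument. Corollary~\ref{co:immediate corollary} tells us that for \emph{any} inner function $B$ of order $k$, the operator $A^{\Theta\circ B}_{\phi\circ B}$ is bounded and unitarily equivalent to $I_k\otimes A^\Theta_\phi$. The key observation is that the equivalence class $I_k\otimes A^\Theta_\phi$ depends on $B$ only through its order $k$, and not on $B$ itself.

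So, first I would apply Corollary~\ref{co:immediate corollary} with $B=B_1$: since $A^\Theta_\phi$ is bounded and $B_1$ has order $k$, we obtain that $A^{\Theta\circ B_1}_{\phi\circ B_1}$ is bounded and unitarily equivalent to $I_k\otimes A^\Theta_\phi$. Next I would apply the same corollary with $B=B_2$: since $B_2$ also has order $k$, we obtain that $A^{\Theta\circ B_2}_{\phi\circ B_2}$ is bounded and unitarily equivalent to $I_k\otimes A^\Theta_\phi$, \emph{the same} operator as before. Since unitary equivalence is an equivalence relation (in particular symmetric and transitive), the two operators are unitarily equivalent to each other, which is exactly the claim.

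Concretely, the corollary supplies unitaries $\omega_{B_1}:K_{B_1}\otimes K_\Theta\to K_{\Theta\circ B_1}$ and $\omega_{B_2}:K_{B_2}\otimes K_\Theta\to K_{\Theta\circ B_2}$ implementing these equivalences. Because $B_1$ and $B_2$ have the same order $k$, the spaces $K_{B_1}$ and $K_{B_2}$ have the same dimension $k$, so there is a unitary $V:K_{B_1}\otimes K_\Theta\to K_{B_2}\otimes K_\Theta$ of the form $W\otimes I_{K_\Theta}$, where $W:K_{B_1}\to K_{B_2}$ is any unitary identifying the two spaces; this $V$ intertwines the two copies of $I_k\otimes A^\Theta_\phi$. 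Composing, the unitary $\omega_{B_2}\,V\,\omega_{B_1}^*:K_{\Theta\circ B_1}\to K_{\Theta\circ B_2}$ intertwines $A^{\Theta\circ B_1}_{\phi\circ B_1}$ and $A^{\Theta\circ B_2}_{\phi\circ B_2}$.

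There is essentially no hard part here, since the real content has already been extracted into Corollary~\ref{co:immediate corollary}; this statement is a formal corollary obtained by transitivity of unitary equivalence. The only point that requires a word of care is verifying that $K_{B_1}$ and $K_{B_2}$ are genuinely unitarily isomorphic, which reduces to the fact that a model space $K_B$ for an inner function $B$ of order $k$ has dimension $k$ (with the usual convention that order $\infty$ corresponds to a separable infinite-dimensional space), so that the ambient Hilbert spaces match and the tensor-factor unitary $W$ exists.
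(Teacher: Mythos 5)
Your proof is correct and is exactly the argument the paper intends: the corollary is stated as an immediate consequence of Corollary~\ref{co:immediate corollary}, with both operators unitarily equivalent to $I_k\otimes A^\Theta_\phi$ and hence to each other by transitivity. The paper gives no further details, so your write-up, including the explicit intertwining unitary $\omega_{B_2}(W\otimes I_{K_\Theta})\omega_{B_1}^*$, matches and slightly elaborates the intended proof.
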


\section{Applications}

We intend to apply Theorem~\ref{th:main} to 
different choices of $B$ and/or $\Theta$, in order to find several classes of operators which are unitarily equivalent to TTOs. In all  the examples in this section we choose $\psi\in K_B$. Then the assumptions of Theorem~\ref{th:main} are satisfied (see Remark~\ref{re:sufficient conditions}~(1)); moreover,  $A^B_{\bar B^j\psi}=0$ for all $j\not=0,1$. Thus we have the following corollary of Theorem~\ref{th:main}, which deserves to be mentioned separately.

\begin{corollary}\label{co:psi in K_B}
If $\psi\in K_B$ and $A^B_\psi, A^B_{\bar B\psi}, A^\Theta_\phi$ are bounded,   then $A^{\Theta\circ B}_{\psi( \phi\circ B)}$ is bounded and
\begin{equation}\label{eq:main2}
 A^{\Theta\circ B}_{\psi( \phi\circ B)}\omega_B=\omega_B [A^B_\psi\otimes A^\Theta_\phi + A^B_{\bar B\psi}\otimes A^\Theta_{z\phi}].
\end{equation}
\end{corollary}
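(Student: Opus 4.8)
The plan is to derive Corollary~\ref{co:psi in K_B} as a direct specialization of Theorem~\ref{th:main} under the extra hypothesis $\psi\in K_B$. The whole point is that this hypothesis collapses the (a priori finite) sum on the right-hand side of~\eqref{eq:main} to just two terms, and the observations needed to verify the hypotheses of Theorem~\ref{th:main} have already been recorded in Remark~\ref{re:sufficient conditions}~(1).

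First I would verify the hypotheses (a), (b), (c) of Theorem~\ref{th:main}. For (a), since $\psi\in K_B = B^0 K_B$, Remark~\ref{re:sufficient conditions}~(1) (with $k=0$) gives immediately that $A^B_{\bar B^j\psi}=0$ for every $j\neq 0,1$, so the operators $A^B_{\bar B^j\psi}$ are nonzero for at most the two indices $j=0$ and $j=1$; these are $A^B_\psi$ and $A^B_{\bar B\psi}$, which are bounded by assumption, so (a) holds. For (c), the same remark tells us $\psi(\phi\circ B)\in L^2$. Condition (b) is the assumed boundedness of $A^\Theta_\phi$. Thus all three conditions of Theorem~\ref{th:main} are in force.

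Next I would simply write out formula~\eqref{eq:main} and discard the vanishing terms. Because $A^B_{\bar B^j\psi}=0$ unless $j\in\{0,1\}$, the tensor sum $\sum_j (A^B_{\bar B^j\psi}\otimes A^\Theta_{z^j\phi})$ reduces to the $j=0$ and $j=1$ summands, namely $A^B_\psi\otimes A^\Theta_\phi + A^B_{\bar B\psi}\otimes A^\Theta_{z\phi}$. Substituting this into~\eqref{eq:main} yields exactly~\eqref{eq:main2}, and the boundedness of $A^{\Theta\circ B}_{\psi(\phi\circ B)}$ is part of the conclusion of Theorem~\ref{th:main}.

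Honestly, there is no serious obstacle here: this corollary is a bookkeeping consequence of the theorem once $\psi\in K_B$ truncates the sum. The only point deserving a word of care is confirming the index set, i.e.\ that $j=0$ and $j=1$ are precisely the surviving indices (and in particular that $A^\Theta_{z\phi}$ is the relevant factor for $j=1$), which again follows from Remark~\ref{re:sufficient conditions}~(1). The proof should therefore be short, consisting of a single invocation of the remark to check the hypotheses and a one-line specialization of~\eqref{eq:main}.
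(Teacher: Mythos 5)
Your proposal is correct and coincides with the paper's own argument: the authors likewise obtain the corollary by invoking Remark~\ref{re:sufficient conditions}~(1) to verify the hypotheses of Theorem~\ref{th:main} and to note that $A^B_{\bar B^j\psi}=0$ for $j\neq 0,1$, so the sum in~\eqref{eq:main} collapses to the two stated terms. Nothing is missing.
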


\subsection{}\label{sse:1}
 Suppose $B(z)=z^n$. Then $K_B$ is equal to the set of analytic polynomials of degree at most $k-1$, and  the monomials $\psi_m(z)=z^m$, $m=0,\dots,k-1$ form a basis of $K_B$.  If $A=(a_{ij})$ is the matrix of $A^B_{\psi_m}$ with respect to this basis, then 
\[
a_{ij}=\begin{cases}
1 & \text{ if }i=j+m,\\
0& \text{ otherwise,}
\end{cases} 
\]
and $A^B_{\bar B \psi_m}= (A^B_{\psi_{n-m}})^*$.

Let $\phi_m\in L^2$, $m=0,\dots,n-1$, such that all the operators $A^\Theta_{\phi_m}$ are bounded. Define
\[
h(z) = \sum_{m=0}^{n-1} z^m (\phi_m (z^n)) = \sum_{m=0}^{n-1} z^m (\phi_m \circ B) .
\]
Using the natural identification
of $K_B\otimes K_\Theta$  with the direct sum of $n$ copies of $K_\Theta$, formula~\eqref{eq:main2} and the above remarks imply that
$A^{\Theta\circ B}_h= \sum_{m=0}^{n-1} A^{\Theta\circ B}_{z^m
      (\phi_m \circ B)}$ is bounded and unitarily equivalent to the block Toeplitz operator matrix
\[
\begin{pmatrix}
A^\Theta_{\phi_0} & A^\Theta_{z\phi_{n-1}}& \dots &A^\Theta_{z\phi_1}\\
A^\Theta_{\phi_1} & A^\Theta_{\phi_0} & \dots& A^\Theta_{z\phi_2}\\
\ddots & \ddots & \ddots & \ddots\\
A^\Theta_{\phi_{n-1}}& A^\Theta_{\phi_{n-2}}& \dots& A^\Theta_{\phi_0}
\end{pmatrix}
\]
It is easily seen that, for $k<\infty$, Corollary~\ref{co:immediate corollary} is a particular case of the above formula.

\subsection{}\label{sse:2}

Next, suppose that  $\Theta(z)=z^n$. An argument similar
    to the one above  gives us yet another class of block Toeplitz
    operator matrices which are equivalent to truncated Toeplitz
    operators. This time, we take functions $\psi_i \in K_B$ ($-n \le i\le n-1$), and we assume that $A^B_{\psi_i}$ is bounded for $-(n-1)\le i \le n-1$, while $A^B_{\bar B\psi_i}$ is bounded for $-n\le i \le n-2$.
Consider a symbol function of the form
\begin{equation}{\label{eq:example2}}
h(z) = \sum_{m = -n}^{n-1} \psi_m(z)B^m
\end{equation}
Then the
operator $A^{B^n}_h$ is bounded and unitarily equivalent to the
following block Toeplitz operator
matrix, each entry of which is a truncated Toeplitz
operator on~$K_B $:
\begin{equation}\label{eq:sse2}
 \begin{pmatrix}
A^B_{\psi_0+\bar B\psi_{-1}} & A^B_{\psi_{-1}+\bar B\psi_{-2}}& \dots &A^B_{\psi_{-(n-1)}+\bar B\psi_{-n}}\\
A^B_{\psi_1+\bar B\psi_0} & A^B_{\psi_0+\bar B\psi_{-1}} & \dots& A^B_{\psi_{-(n-2)}+\bar B\psi_{-(n-1)}}\\
\ddots & \ddots & \ddots & \ddots\\
A^B_{\psi_{n-1}+\bar B\psi_{n-2}}& A^B_{\psi_{n-2}+\bar B\psi_{n-3}}& \dots& A^B_{\psi_0+\bar B\psi_{-1}}
\end{pmatrix}
\end{equation}

If $A = (a_{i-j})_{i,j =1,\dots, n}$  is a
    classical 
Toeplitz matrix, then, by
    considering the function $h$ in
    \eqref{eq:example2}  with $\psi_i = a_i$ we obtain (noting that $A^B_{\bar B}=0$) that
    $A^{B^n}_{\psi(B)}$ is unitarily equivalent to $A \otimes I_{K_B}$
      for any inner function $B$. This is
Theorem~5.8 of~\cite{CGRW}.

%There is a different way of looking at the matrix~\eqref{eq:sse2}. Namely, it is the general form of a block Toeplitz operator matrix $(A^B_{\Psi_{i-j}})_{i,j=1}^n$ with entries TTOs on $K_B$ subjected to the condition
%\begin{equation}\label{eq:generalform}
%P_-\Psi_i=\bar B(P_+ \Psi_{i-1}).
%\end{equation}
%Obviously the entries of the matrix in~\eqref{eq:sse2} satisfy~\eqref{eq:generalform}. On the other hand, if we are given a matrix of the form $(A^B_{\Psi_{i-j}})_{i,j=1}^n$ and the $\psi_i$ satisfy~\eqref{eq:generalform}, 
%then the matrix is actually of the
%  form~\eqref{eq:sse2}
%for the functions $\psi_i$ ($|i|\le n-1$) defined by $\psi_i=P_{K_B}\Psi_i$. Indeed, note first that $P_+=P_{K_B}+P_{B H^2}$ implies $\bar B(P_+ \Psi_{i-1})= \bar B(P_{K_B} \Psi_{i-1})+ \bar B(P_{BH^2} \Psi_{i-1})$. But, since the second term in the sum is in $H^2$, the assumption~\eqref{eq:generalform} implies that it is zero; so
%\[
%P_-\Psi_i=\bar B(P_{K_B} \Psi_{i-1}).
%\]
%But we also have $A^B_{P_+\Psi_i}=  A^B_{P_{K_B}\Psi_i}$, and so
%\[
%A^B_{\Psi_i}=A^B_{P_+\Psi_i}+A^B_{P_-\Psi_i}= A^B_{P_{K_B}\Psi_i}+ A^B_{\bar B P_{K_B} \Psi_{i-1}}=A^B_{\psi_i+\bar B \psi_{i-1}},
%\]
%which proves the claim.
%\marginpar{Could one replace~\eqref{eq:generalform} with a condition on operators?}
%

\subsection{}\label{sse:3}

Suppose that $B, \Theta$ are arbitrary inner functions and that $\phi=\bar z\Theta$. As noted in Section~\ref{se:preliminaries}, $A^\Theta_\phi=\tilde k^\Theta_0\odot k^\Theta_0$. On the other hand, $A^\Theta_{z\phi}=A^\Theta_\Theta=0$. Therefore~\eqref{eq:main2} implies that, if $\psi\in K_B$ and $A^B_\psi$ is bounded, then $A^{\Theta\circ B}_{\psi (\phi\circ B)}$ is bounded and unitarily equivalent to $A^B_\psi\otimes (\tilde k^\Theta_0\odot k^\Theta_0)$.

The following lemma is well known (and easy to prove).

\begin{lemma}\label{le:rank one}
Two operators of rank one $x_i\odot y_i\in \LL(\EE_i)$, $i=1,2$, are unitarily equivalent if and only if the following are satisfied:
\begin{enumerate}
\item $\dim\EE_1=\dim \EE_2$;
\item $\|x_1\|\|y_1\|=\|x_2\|\|y_2\|$;
\item $\<x_1, y_1\>=\<x_2, y_2\>$.
\end{enumerate}
\end{lemma}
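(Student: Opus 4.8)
The plan is to prove both directions of the equivalence for rank one operators $x_i \odot y_i \in \LL(\EE_i)$. The three stated conditions are clearly invariant under unitary equivalence, so the substance lies in the reverse implication: that conditions (1)--(3) suffice to construct a unitary $U\colon \EE_1 \to \EE_2$ with $U(x_1\odot y_1)U^* = x_2 \odot y_2$.

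First I would dispose of the easy direction. If $W\colon \EE_1\to\EE_2$ is unitary with $W(x_1\odot y_1)W^* = x_2\odot y_2$, then $\dim\EE_1 = \dim\EE_2$ since $W$ is a bijection. A direct computation gives $W(x_1\odot y_1)W^*\xi = \<W^*\xi, y_1\>Wx_1 = \<\xi, Wy_1\>Wx_1 = (Wx_1)\odot(Wy_1)$, so the operator $x_2\odot y_2$ equals $(Wx_1)\odot(Wy_1)$; matching a rank one operator with its expression as a tensor then yields that $Wx_1 = \alpha x_2$ and $Wy_1 = \bar\alpha^{-1} y_2$ for some nonzero scalar $\alpha$ (the scalars must be reciprocal conjugates for the products to agree). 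Since $W$ is isometric, $\|x_1\|=|\alpha|\,\|x_2\|$ and $\|y_1\| = |\alpha|^{-1}\|y_2\|$, whose product gives (2); and $\<x_1,y_1\> = \<W x_1, W y_1\> = \<\alpha x_2, \bar\alpha^{-1} y_2\> = \<x_2, y_2\>$, giving (3).

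For the converse, the main step is the explicit construction of the unitary. I would treat the generic case $x_i, y_i \neq 0$ first (the case where some vector is zero forces the operator to be zero and is trivial given condition (2)). The idea is to build $U$ so that it sends the two-dimensional subspace $\mathrm{span}\{x_1, y_1\}$ onto $\mathrm{span}\{x_2, y_2\}$ in a way matching the Gram data, and then extend by any unitary between the orthogonal complements, which have equal dimension by (1). Condition (3) guarantees the inner-product relationship $\<x_1,y_1\>=\<x_2,y_2\>$ is preserved, and condition (2) lets me normalize lengths after absorbing a scalar factor $\alpha$ into $x_i$ versus $y_i$ as in the forward direction. The cleanest route is to note that $x_i\odot y_i$ is determined up to unitary equivalence by the pair of Gram matrices of $\{x_i\}$ and $\{y_i\}$ together with the cross inner products; so it suffices to find unit-modulus scalar $\alpha$ and rescalings making the Gram data of $(\alpha x_1, \bar\alpha^{-1} y_1)$ agree with that of $(x_2, y_2)$, after which a standard lemma produces a unitary carrying one configuration to the other.

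The step I expect to be the genuine obstacle, modest though it is, is handling the scalar freedom correctly: the rank one operator $x\odot y$ is unchanged under $x\mapsto \alpha x$, $y \mapsto \bar\alpha^{-1} y$, so the individual norms $\|x_i\|, \|y_i\|$ and inner products $\<x_i,x_i\>$ are \emph{not} invariants, only the products and cross terms appearing in (2) and (3) are. I must verify that the invariants in (1)--(3) constitute a \emph{complete} set, i.e. that any two configurations with the same data are related by such a scalar gauge plus a unitary. This amounts to checking that after fixing the gauge to equalize, say, $\|x_1\|=\|x_2\|$, condition (2) forces $\|y_1\|=\|y_2\|$ and condition (3) forces the angles to match, so that the linear map $x_1\mapsto x_2$, $y_1\mapsto y_2$ extends to the required unitary. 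Once the gauge bookkeeping is set up, the construction is routine linear algebra on a space of dimension at most two plus an arbitrary unitary on the complement.
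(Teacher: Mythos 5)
Your argument is correct. Note that the paper offers no proof of this lemma at all --- it is introduced with the phrase ``well known (and easy to prove)'' --- so there is no argument of the authors to compare against; what you describe is the standard proof, and it goes through. Both halves check out: the identity $W(x_1\odot y_1)W^*=(Wx_1)\odot(Wy_1)$, together with the uniqueness of the rank-one representation up to the gauge $x\mapsto\alpha x$, $y\mapsto\bar\alpha^{-1}y$, gives necessity of (2) and (3); for sufficiency, choosing the gauge $\alpha=\|x_2\|/\|x_1\|>0$ (which is real, hence leaves $\<x_1,y_1\>$ unchanged) normalizes to $\|x_1\|=\|x_2\|$, after which (2) gives $\|y_1\|=\|y_2\|$ and (3) gives equality of the cross terms, so the pairs $(x_1,y_1)$ and $(x_2,y_2)$ have identical Gram matrices; a unitary between their spans carrying $x_1\mapsto x_2$, $y_1\mapsto y_2$ then exists and extends to a unitary $\EE_1\to\EE_2$ by (1). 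The two points you leave implicit --- that equal Gram matrices yield a unitary between spans, and that the orthogonal complements of these (at most two-dimensional) spans have equal Hilbert-space dimension even when $\dim\EE_i=\infty$ --- are genuinely routine. Since the operators are assumed to have rank exactly one, all four vectors are nonzero and the degenerate case you set aside does not actually arise.
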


\begin{corollary}\label{co:rank one}
Suppose that $R=x\odot y$ is a rank one operator on a Hilbert space $\EE$ with $\dim\EE\ge 2$. The following are equivalent:
\begin{itemize}
\item[{\rm (i)}]
There exists an inner function $\Theta$ such that $R$ is unitarily equivalent to a scalar multiple of $\tilde k^\Theta_0\odot k^\Theta_0$ acting in $K_\Theta$.

\item[{\rm (ii)}]
Either $R=0$ or $x, y$ are not colinear. 
\end{itemize}
\end{corollary}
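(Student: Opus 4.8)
The plan is to prove the two implications separately, using Lemma~\ref{le:rank one} as the main computational tool together with the analysis of Subsection~\ref{sse:3}, which shows that for any inner $\Theta$ and any inner $B$ with $A^B_\psi$ bounded, $\psi\in K_B$, the operator $A^{\Theta\circ B}_{\psi(\phi\circ B)}$ (with $\phi=\bar z\Theta$) is unitarily equivalent to $A^B_\psi\otimes(\tilde k^\Theta_0\odot k^\Theta_0)$. The key observation is that $\tilde k^\Theta_0\odot k^\Theta_0$ is a rank one operator on $K_\Theta$, and I would first compute the three unitary invariants from Lemma~\ref{le:rank one} for this model operator: the norms $\|\tilde k^\Theta_0\|$, $\|k^\Theta_0\|$, and the inner product $\<\tilde k^\Theta_0, k^\Theta_0\>$. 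Using the explicit formulas in~\eqref{eq:reproducing kernels} one finds $k^\Theta_0(z)=1-\overline{\Theta(0)}\Theta(z)$ and $\tilde k^\Theta_0(z)=(\Theta(z)-\Theta(0))/z$; a short calculation gives $\|k^\Theta_0\|^2=1-|\Theta(0)|^2=\|\tilde k^\Theta_0\|^2$ and, crucially, $\<\tilde k^\Theta_0, k^\Theta_0\>=-\overline{\Theta(0)}(1-|\Theta(0)|^2)$ up to the precise constant, so that the inner product equals $-\overline{\Theta(0)}$ times the product of the norms.

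For the implication (ii)$\Rightarrow$(i), if $R=0$ the statement is trivial (take $\Theta=z^2$, say, to guarantee $\dim K_\Theta\ge 2$, and a zero multiple). So assume $x,y$ are not colinear; then $\|x\|,\|y\|\neq 0$ and, by Cauchy--Schwarz together with non-colinearity, $|\<x,y\>|<\|x\|\,\|y\|$. I would rescale to a multiple $c\cdot\tilde k^\Theta_0\odot k^\Theta_0$ and match the three invariants of Lemma~\ref{le:rank one}: the dimension condition is arranged by choosing $B$ of the appropriate order so that $\dim K_{\Theta\circ B}=\dim\EE$ (using Proposition~\ref{pr:Omega_B and K_Theta} and Subsection~\ref{sse:3} to realize the operator on a space of any desired finite or infinite dimension $\ge 2$); the norm condition fixes $|c|$; and the inner product condition amounts to solving $\<x,y\>/(\|x\|\,\|y\|)=-\overline{\Theta(0)}$ for a suitable value of $\Theta(0)\in\bbD$. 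Since $|\<x,y\>|/(\|x\|\,\|y\|)<1$, the required value $\overline{\Theta(0)}$ lies in $\bbD$, and one can always choose an inner function $\Theta$ (for instance a Blaschke factor times a scalar, arranged so that $K_\Theta$ has the needed dimension before composing with $B$) taking that prescribed value at $0$.

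For (i)$\Rightarrow$(ii), I would argue by contraposition: if $R\neq 0$ and $x,y$ are colinear, say $y=\mu x$, then $R$ is a nonzero scalar multiple of a self-adjoint rank one projection-type operator and in particular $|\<x,y\>|=\|x\|\,\|y\|$. But the invariant computed above for $\tilde k^\Theta_0\odot k^\Theta_0$ satisfies $|\<\tilde k^\Theta_0,k^\Theta_0\>|=|\Theta(0)|\,\|\tilde k^\Theta_0\|\,\|k^\Theta_0\|$ with $|\Theta(0)|<1$ (as $\Theta$ is inner and non-constant whenever $K_\Theta$ is nontrivial); hence the ratio is strictly less than $1$, contradicting condition (3) of Lemma~\ref{le:rank one}. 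Therefore no such $\Theta$ exists, which gives the contrapositive. The main obstacle I anticipate is the bookkeeping in (ii)$\Rightarrow$(i): one must simultaneously control the dimension of the model space and the value $\Theta(0)$, verifying that these constraints are compatible (in particular handling the infinite-dimensional case and checking that the chosen $\Theta$ can be taken with $K_\Theta$ of the right dimension while $\Theta(0)$ realizes the prescribed complex number in $\bbD$).
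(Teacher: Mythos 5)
There is a genuine gap, and it lies in the central computation. The inner product $\<\tilde k^\Theta_0, k^\Theta_0\>$ is \emph{not} $-\overline{\Theta(0)}(1-|\Theta(0)|^2)$: since $k^\Theta_0$ is the reproducing kernel at $0$ and $\tilde k^\Theta_0(z)=(\Theta(z)-\Theta(0))/z$, one has $\<\tilde k^\Theta_0, k^\Theta_0\>=\tilde k^\Theta_0(0)=\Theta'(0)$. (A quick sanity check: for $\Theta(z)=z\frac{z-a}{1-\bar a z}$ with $a\ne0$ your formula predicts the inner product is $0$, whereas it actually equals $-a$.) Consequently the correct unitary invariant is $|\<\tilde k^\Theta_0,k^\Theta_0\>|/(\|\tilde k^\Theta_0\|\|k^\Theta_0\|)=|\Theta'(0)|/(1-|\Theta(0)|^2)$, not $|\Theta(0)|$. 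This error breaks both directions. In (ii)$\Rightarrow$(i), matching $\<x,y\>/(\|x\|\|y\|)$ to $-\overline{\Theta(0)}$ does not control the actual invariant $\Theta'(0)/(1-|\Theta(0)|^2)$ at all, so the constructed $\Theta$ will in general fail condition (3) of Lemma~\ref{le:rank one}. The fix is the paper's: normalize $\|x\|=\|y\|=1$, take $\Theta(0)=0$ and $\Theta'(0)=\<x,y\>$ (possible since $|\<x,y\>|<1$ by non-colinearity, e.g.\ $\Theta=zB$ with $B(0)=\<x,y\>$ and $B$ of order $\dim\EE-1$), so that all three invariants match. In (i)$\Rightarrow$(ii), your claim that the ratio is $<1$ because $|\Theta(0)|<1$ is unsupported; the true reason is the Schwarz--Pick inequality $|\Theta'(0)|\le 1-|\Theta(0)|^2$, with equality only when $\Theta$ is a M\"obius map, which is excluded precisely by the hypothesis $\dim K_\Theta=\dim\EE\ge2$. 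That your argument never uses $\dim\EE\ge2$ is a warning sign: for $\Theta(z)=z$ one has $\tilde k^\Theta_0=k^\Theta_0=1$, a self-adjoint rank-one operator, so the equivalence genuinely fails in dimension one.

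A secondary point: the detour through Subsection~\ref{sse:3} and a composition $\Theta\circ B$ is unnecessary and somewhat off target here. Composing with $B$ replaces the rank-one operator by a tensor product $A^B_\psi\otimes(\tilde k^\Theta_0\odot k^\Theta_0)$, which is no longer rank one; to control $\dim K_\Theta$ one should simply choose $\Theta$ of the required order directly (a Blaschke product of degree $\dim\EE$ in the finite case, any infinite-order inner function otherwise), keeping the freedom to prescribe $\Theta(0)$ and $\Theta'(0)$ as above.
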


\begin{proof} Suppose that (i) is satisfied. Then $\dim K_\Theta=\dim\EE\ge 2$.
 According to~\eqref{eq:reproducing kernels}, we have
 \[
 \frac{| \<\tilde k_{0}^{\Theta},  {k_{0}^{\Theta}}\> | }{\|k_{0}^{\Theta}\|\cdot \|\tilde{k_{0}^{\Theta}}\|}
 =\frac{|\tilde k_{0}^{\Theta}(0)|}{(1-|\Theta(0)|^2)^{-1}}=
 |\Theta'(0)| (1-|\Theta(0)|^2).
 \]
 The last expression can be 1 only if $\Theta(z)=z$, which contradicts $\dim K_\Theta\ge 2$. Thus it is $<1$, so $k_{0}^{\Theta}$ and $\tilde k_{0}^{\Theta}$ are not colinear. If $R\not=0$, it follows 
 from Lemma~\ref{le:rank one}  that $x$ and $y$ are not colinear.
 
 Conversely, take a nonzero operator of rank 1 $x\odot y$ acting on $\EE$; we may suppose that it has norm 1, and therefore that $\|x\|=\|y\|=1$. Then if $\Theta$ is such that $\Theta(0)=0$, $\Theta'(0)=\<x,y\>$, and $\dim K_\Theta=\dim \EE$, a second application of Lemma~\ref{le:rank one} implies that $x\odot y$ is unitarily equivalent to $\tilde k^\Theta_0\odot k^\Theta_0$.
\end{proof}

Let us also note that if $\psi$ is analytic, then $A^B_\psi=A^B_{P_\Theta\psi}$. Then
 Corollary~\ref{co:rank one}, together with the comments preceding Lemma~\ref{le:rank one}, lead to the following result.

\begin{proposition}\label{pr:rank one}
If $\psi$ is an analytic function, $A^B_\psi$ is bounded, and $R$ is a nonselfadjoint operator of rank one, then  $A^B_\psi\otimes R$ is unitarily equivalent to a TTO.
\end{proposition}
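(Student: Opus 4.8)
The plan is to combine Corollary~\ref{co:rank one} with the observation in subsection~\ref{sse:3} that $A^B_\psi\otimes(\tilde k^\Theta_0\odot k^\Theta_0)$ is unitarily equivalent to a genuine TTO. First I would use the hypothesis that $R$ is a nonselfadjoint rank one operator: by Corollary~\ref{co:rank one}, applied on a space $\EE$ of dimension at least $2$, $R$ is unitarily equivalent to a scalar multiple of $\tilde k^\Theta_0\odot k^\Theta_0$ on some model space $K_\Theta$. Here the nonselfadjointness is exactly what guarantees via Corollary~\ref{co:rank one}(ii) that the defining vectors of $R$ are not colinear (a selfadjoint rank one operator $x\odot x$ has colinear vectors), so the corollary applies. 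I should check the dimension hypothesis $\dim\EE\ge 2$; if $\dim\EE=1$ then any rank one operator is selfadjoint up to a scalar, so nonselfadjointness forces $\dim\EE\ge 2$ automatically.

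Next I would fix the inner function $\Theta$ produced by Corollary~\ref{co:rank one}, so that $R$ is unitarily equivalent to $c(\tilde k^\Theta_0\odot k^\Theta_0)$ for some scalar $c$. The key computation of subsection~\ref{sse:3} then shows that, taking $\phi=\bar z\Theta$, we have $A^\Theta_\phi=\tilde k^\Theta_0\odot k^\Theta_0$ and $A^\Theta_{z\phi}=A^\Theta_\Theta=0$. Since $A^B_\psi$ is bounded by hypothesis and $\psi\in K_B$ after replacing $\psi$ by $P_B\psi$ (using the remark that $A^B_\psi=A^B_{P_B\psi}$ for analytic $\psi$, which lets me assume $\psi\in K_B$ without changing the operator), Corollary~\ref{co:psi in K_B} applies and yields
\[
A^{\Theta\circ B}_{\psi(\phi\circ B)}\,\omega_B=\omega_B\,\big(A^B_\psi\otimes(\tilde k^\Theta_0\odot k^\Theta_0)\big),
\]
exhibiting $A^B_\psi\otimes(\tilde k^\Theta_0\odot k^\Theta_0)$ as unitarily equivalent to a TTO on $K_{\Theta\circ B}$.

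Finally I would splice the two equivalences together. Since $R\cong c(\tilde k^\Theta_0\odot k^\Theta_0)$, tensoring by the fixed operator $A^B_\psi$ gives $A^B_\psi\otimes R\cong c\,(A^B_\psi\otimes(\tilde k^\Theta_0\odot k^\Theta_0))$, and the latter is a scalar multiple of a TTO, hence itself a TTO (the symbol $\psi(\phi\circ B)$ may simply be multiplied by $c$). Thus $A^B_\psi\otimes R$ is unitarily equivalent to a truncated Toeplitz operator. The main subtlety, rather than an obstacle, is ensuring consistency of the normalizations: Corollary~\ref{co:rank one} delivers $R$ as a \emph{scalar multiple} of $\tilde k^\Theta_0\odot k^\Theta_0$, and one must track that scalar through the tensor product, which is harmless since scaling the symbol $\phi=\bar z\Theta$ by a constant scales the resulting TTO correspondingly. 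I would also verify that the inner function $\Theta$ can be chosen so that $\dim K_{\Theta\circ B}$ matches the required space, but this is automatic once $\Theta$ and $B$ are fixed, since $\omega_B$ is a unitary onto $K_{\Theta\circ B}$.
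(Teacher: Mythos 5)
Your argument is essentially the paper's own proof, which simply cites Corollary~\ref{co:rank one} together with the computation of Subsection~\ref{sse:3}; you have filled in the same steps (reduction to $\psi\in K_B$ via $A^B_\psi=A^B_{P_B\psi}$, the symbol $\phi=\bar z\Theta$, and tracking the scalar through the tensor product) correctly. The only point glossed over --- equally so in the paper --- is that a nonselfadjoint rank one operator can still have colinear vectors (e.g.\ $i(x\odot x)$), a case that Corollary~\ref{co:rank one} excludes rather than covers, so strictly speaking the cited corollary applies only when the two defining vectors of $R$ are not colinear.
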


{\bf Question:} What happens if $R$ is selfadjoint? Then $A^B_\psi\otimes R$ is unitarily equivalent to a scalar multiple of $A^B_\psi\oplus 0$ (where the zero operator operates on a space whose dimension is equal to the dimension of the range of $I-R$). More about this in the next section.

\section{Direct sum with zero}

Question 5.11 of~\cite{CGRW} asks for conditions under which a direct sum of TTOs is unitarily equivalent to another TTO. We are interested here in the situation where one of the
    operators is 0 on a space of a given
    dimension $k$. The question then becomes: find classes of TTOs $A$ and  integers $k=1,2,\dots,\infty$ such that, if $0_k$ denotes the
      zero operator on a space of dimension~$k$, then  $A\oplus 0_k$ is unitarily equivalent to a TTO.

Two such classes of truncated Toeplitz operators
  have already been discovered:
\begin{enumerate}
\item $A$ a rank one operator, $k$ arbitrary (by~\cite[Theorem 5.1]{CGRW}.)
\item $A$ a normal operator, $k$ arbitrary (by~\cite[Theorem 5.7]{CGRW}.)
\end{enumerate}

Below we present two more classes of pairs $(A,k)$ that belong to the category described above. The first is obtained by using the techniques of the previous section.

\begin{theorem}\label{th:direct sum}
 Let $B$ be an inner function, $\psi \in K_B$, $\zeta\in \bbT$, and 
$n \in \bbN \cup \{\infty \}$. If $d={\dim K_B}$, then the operator 
 $A_{\psi+\zeta{\bar B}\psi}\oplus 0_{nd}$ is unitarily 
equivalent to a TTO.
\end{theorem}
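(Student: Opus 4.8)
The plan is to exhibit $A^B_{\psi+\zeta\bar B\psi}\oplus 0_{nd}$ as a single truncated Toeplitz operator of the form $A^{\Theta\circ B}_{\psi(\phi\circ B)}$, by applying Corollary~\ref{co:psi in K_B} with a judiciously chosen second inner function $\Theta$ and symbol $\phi$. Recall that this corollary gives
\[
A^{\Theta\circ B}_{\psi(\phi\circ B)}\omega_B = \omega_B\left[A^B_\psi\otimes A^\Theta_\phi + A^B_{\bar B\psi}\otimes A^\Theta_{z\phi}\right],
\]
so, since $A^B_{\psi+\zeta\bar B\psi} = A^B_\psi + \zeta A^B_{\bar B\psi}$, it suffices to produce $\Theta$ and $\phi$ with two properties: (i) $A^\Theta_\phi$ is a rank-one orthogonal projection $e\odot e$ onto a line in $K_\Theta$; and (ii) $A^\Theta_{z\phi} = \zeta A^\Theta_\phi$. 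Given these, the bracket collapses to $(A^B_\psi + \zeta A^B_{\bar B\psi})\otimes(e\odot e) = A^B_{\psi+\zeta\bar B\psi}\otimes(e\odot e)$, and tensoring with a rank-one projection on $K_\Theta$ is unitarily equivalent to a direct sum of $A^B_{\psi+\zeta\bar B\psi}$ with $0$ on $K_B\otimes(K_\Theta\ominus\bbC e)$. To make the zero summand have dimension exactly $nd$ I would require $\dim K_\Theta = n+1$.

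For the data I would pick any inner $\Theta$ with $\dim K_\Theta = n+1$ whose angular derivative exists at the prescribed point $\zeta$ --- for finite $n$ one may simply take $\Theta = z^{n+1}$, and for $n=\infty$ an infinite Blaschke product (or singular inner function) whose mass sits away from $\zeta$, composed with a rotation if necessary to place the good boundary point at $\zeta$. Writing $e = k^\Theta_\zeta/\|k^\Theta_\zeta\|$ (a unit vector, since $\|k^\Theta_\zeta\|^2 = |\Theta'(\zeta)|\in(0,\infty)$), I would set
\[
\phi = \frac{1}{\|k^\Theta_\zeta\|^2}\left(k^\Theta_\zeta + \overline{k^\Theta_\zeta} - 1\right),
\]
which, by the symbol computation recalled in Section~\ref{se:preliminaries}, gives $A^\Theta_\phi = \|k^\Theta_\zeta\|^{-2}\,k^\Theta_\zeta\odot k^\Theta_\zeta = e\odot e$, settling (i). The normalization is what turns the rank-one TTO into an honest projection and thereby avoids an unwanted scalar factor in the final equivalence.

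The heart of the argument --- and the step I expect to be the main obstacle --- is property (ii). By homogeneity it is equivalent to prove $A^\Theta_{z\tilde\phi} = \zeta A^\Theta_{\tilde\phi}$ for the unnormalized symbol $\tilde\phi = k^\Theta_\zeta + \overline{k^\Theta_\zeta}-1$, where $A^\Theta_{\tilde\phi} = k^\Theta_\zeta\odot k^\Theta_\zeta$. I would compute $A^\Theta_{z\tilde\phi}f = P_\Theta(z\tilde\phi f)$ directly for $f\in K_\Theta\cap H^\infty$. Decomposing $\tilde\phi f = f(\zeta)k^\Theta_\zeta + u_+ + u_-$ along $K_\Theta\oplus \Theta H^2\oplus H^2_-$, multiplication by $z$ leaves $\Theta H^2$ invariant (so $u_+$ contributes nothing after $P_\Theta$) and sends $u_-\in H^2_-$ into $H^2_-$ plus the single constant $\widehat{u_-}(-1) = \langle z\tilde\phi f, 1\rangle$, which the reproducing property evaluates as $\zeta f(\zeta)$. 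Combined with the forward-shift formula $P_\Theta(z k^\Theta_\zeta) = \zeta(k^\Theta_\zeta - k^\Theta_0)$ --- itself coming from $zk^\Theta_\zeta = \zeta k^\Theta_\zeta - \zeta(1-\overline{\Theta(\zeta)}\Theta)$ together with $P_\Theta 1 = k^\Theta_0$ and $P_\Theta\Theta = 0$ --- the contributions add up to
\[
A^\Theta_{z\tilde\phi}f = f(\zeta)\bigl(\zeta k^\Theta_\zeta - \zeta k^\Theta_0\bigr) + \zeta f(\zeta)k^\Theta_0 = \zeta f(\zeta)k^\Theta_\zeta = \zeta (k^\Theta_\zeta\odot k^\Theta_\zeta)f,
\]
which is exactly $\zeta A^\Theta_{\tilde\phi}f$. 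The delicate points are the bookkeeping of the $H^2_-$ term (only the Fourier coefficient at $-1$ survives projection after multiplication by $z$) and the boundary forward-shift formula; these are where I would take most care. With (i) and (ii) established, property (i) makes $A^\Theta_\phi$ bounded, so together with the standing boundedness of $A^B_\psi$ and $A^B_{\bar B\psi}$ (needed for $A^B_{\psi+\zeta\bar B\psi}$ to be a bounded TTO) Corollary~\ref{co:psi in K_B} applies and yields
\[
A^{\Theta\circ B}_{\psi(\phi\circ B)}\omega_B = \omega_B\bigl[A^B_\psi\otimes(e\odot e) + A^B_{\bar B\psi}\otimes\zeta(e\odot e)\bigr] = \omega_B\bigl[A^B_{\psi+\zeta\bar B\psi}\otimes(e\odot e)\bigr].
\]
Choosing an orthonormal basis of $K_\Theta$ beginning with $e$ identifies $A^B_{\psi+\zeta\bar B\psi}\otimes(e\odot e)$ with $A^B_{\psi+\zeta\bar B\psi}\oplus 0$, the zero acting on $K_B\otimes(K_\Theta\ominus\bbC e)$, of dimension $d\,(\dim K_\Theta - 1) = dn = nd$. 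Hence $A^B_{\psi+\zeta\bar B\psi}\oplus 0_{nd}$ is unitarily equivalent to the truncated Toeplitz operator $A^{\Theta\circ B}_{\psi(\phi\circ B)}$, as claimed.
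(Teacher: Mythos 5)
Your overall strategy is the paper's: pick $\Theta$ with $\dim K_\Theta=n+1$ and an angular derivative at $\zeta$, feed $\phi$ with $A^\Theta_\phi$ a multiple of $k^\Theta_\zeta\odot k^\Theta_\zeta$ into Corollary~\ref{co:psi in K_B}, and collapse the tensor with a rank-one selfadjoint operator into a direct sum with zero. But your step (ii) is false: for $\tilde\phi=k^\Theta_\zeta+\overline{k^\Theta_\zeta}-1$ one does \emph{not} have $A^\Theta_{z\tilde\phi}=\zeta A^\Theta_{\tilde\phi}$. Indeed, using $(z-\zeta)=-\zeta(1-\bar\zeta z)=z(1-\zeta\bar z)$ on $\bbT$, a direct computation gives
\[
z\tilde\phi-\zeta\tilde\phi \;=\; \zeta\,\overline{\Theta(\zeta)}\,\Theta\;-\;\Theta(\zeta)\,z\bar\Theta .
\]
The first term lies in $\Theta H^2$ and contributes nothing, but $z\bar\Theta\notin \Theta H^2+\overline{\Theta H^2}$: in fact $A^\Theta_{z\bar\Theta}=(A^\Theta_{\bar z\Theta})^*=k^\Theta_0\odot\tilde k^\Theta_0\neq 0$. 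Hence
\[
A^\Theta_{z\tilde\phi}\;=\;\zeta\,(k^\Theta_\zeta\odot k^\Theta_\zeta)\;-\;\Theta(\zeta)\,(k^\Theta_0\odot\tilde k^\Theta_0),
\]
and since $|\Theta(\zeta)|=1$ the correction term never vanishes. The slip in your direct computation is the ``reproducing property'' step: you evaluate $\langle zf,k^\Theta_\zeta\rangle$ as $\zeta f(\zeta)$, but $k^\Theta_\zeta$ only reproduces elements of $K_\Theta$, and $zf$ has a component $\langle f,\tilde k^\Theta_0\rangle\,\Theta$ in $\Theta H^2$, which contributes the extra $-\Theta(\zeta)\langle f,\tilde k^\Theta_0\rangle$. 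With your $\phi$ the bracket in Corollary~\ref{co:psi in K_B} therefore does not collapse: an unwanted summand $-\|k^\Theta_\zeta\|^{-2}\Theta(\zeta)\,A^B_{\bar B\psi}\otimes(k^\Theta_0\odot\tilde k^\Theta_0)$ survives, and the conclusion does not follow.

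The repair is small and is exactly what the paper does: replace the symbol by $\phi=k^\Theta_\zeta+\overline{k^\Theta_\zeta}-1+\Theta(\zeta)\bar\Theta$. Since $\Theta(\zeta)\bar\Theta\in\overline{\Theta H^2}$, this is still a symbol of $k^\Theta_\zeta\odot k^\Theta_\zeta$ by~\eqref{eq:same symbol}, and now $z\phi-\zeta\phi=\zeta\overline{\Theta(\zeta)}\Theta-\zeta\Theta(\zeta)\bar\Theta\in\Theta H^2+\overline{\Theta H^2}$, so $A^\Theta_{z\phi}=\zeta A^\Theta_\phi$ genuinely holds. Everything after that point in your argument (the normalization making $e\odot e$ a projection, the choice of basis, and the dimension count $d(\dim K_\Theta-1)=nd$) is correct and matches the paper up to the cosmetic difference that the paper normalizes by $\|k^\Theta_\zeta\|^{-2}$ at the end rather than in the symbol.
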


\begin{proof}
Let $\Theta$ be an inner function whose angular derivative at $\zeta$ exists and such that
$\dim K_\Theta=n+1$. As noted in Section~\ref{se:preliminaries},  $k^{\Theta}_\zeta\odot k^{\Theta}_\zeta$ 
is a TTO on $K_\Theta$ with symbol  $k^{\Theta}_\zeta + \overline {k^{\Theta}_\zeta}-1$.

Let $\phi =k^{\Theta}_\zeta + \overline {k^{\Theta}_\zeta}-1+\Theta(\zeta)\bar \Theta$; then $\phi$ is also a symbol
of $k^{\Theta}_\zeta\odot k^{\Theta}_\zeta$ by ~\eqref{eq:same symbol}, and 
a simple calculation shows that 
$\phi-\bar\zeta z\phi=-\overline {\Theta (\zeta)}\Theta+\Theta(\zeta) \bar \Theta$. Applying again~\eqref{eq:same symbol}, it follows that 
$A^\Theta_\phi=\bar\zeta A^\Theta_{z\phi}=k^{\Theta}_\zeta\odot k^{\Theta}_\zeta$. So, by using \eqref{eq:main}, we obtain that 
\[
A^{\Theta\circ B}_{\psi( \phi\circ B)}\omega_B=\omega_B \left( A^B_{\psi+\zeta\bar B \psi}\otimes 
(k^{\Theta}_\zeta\odot k^{\Theta}_\zeta)\right).
\]
Since $k^{\Theta}_\zeta\odot k^{\Theta}_\zeta$ is a self adjoint operator 
 of rank one, 
$\frac{1}{\Vert k^{\Theta}_\zeta\Vert^2} A^B_{\psi+\zeta\bar B \psi}\otimes ( k^{\Theta}_\zeta\odot k^{\Theta}_\zeta)$ is unitarily
 equivalent to 
$A_{\psi+\zeta{\bar B}\psi}\oplus 0_{d}\oplus \ldots \oplus 0_{d}$, where $0_{d}$ is repeated $n$ times. Therefore
 the last operator is unitarily equivalent to the TTO  defined on $K_{\Theta\circ B}$ and with symbol
$\frac{1}{\Vert k^{\Theta}_\zeta\Vert^2} \psi \left(k^{\Theta}_\zeta\circ B + 
\overline {k^{\Theta}_\zeta\circ B}+\Theta(\zeta)\overline{\Theta\circ B}-1\right)$. 
\end{proof}

A particular case of Theorem~\ref{th:direct sum} is worth mentioning: taking $\Theta(z)=z^2$, one obtains that the operator $A_{\psi+\zeta{\bar B}\psi}\oplus 0_{d}$ is unitarily equivalent to $A^{B^2}_{\psi( \phi\circ B)}$, where $\phi (z)=\frac{1}{2}({\bar\zeta}^2{\bar z}^2+{\bar \zeta}{\bar z}+1+\zeta z)$.

\medskip

The second result in this direction is obtained without
    using the previous sections; we have added it because its fits naturally in this context. We start with a simple lemma.

\begin{lemma}\label{le:kerofTTO}
Suppose $\Theta$ is inner and $\phi\in H^\infty$. Then
$
K_\Theta\ominus \ker A^\Theta_\phi=K_u
$ 
for some inner function $u$ which satisfies
 $u|\Theta$ and $\Theta|u\phi$.
\end{lemma}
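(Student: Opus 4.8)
The plan is to show that $\ker A^\Theta_\phi$ is an $S^*$-invariant subspace of $K_\Theta$, so that its orthogonal complement $K_\Theta \ominus \ker A^\Theta_\phi$ is $S$-invariant inside $K_\Theta$, and then to identify such subspaces with model spaces $K_u$ for $u | \Theta$. The two divisibility conditions $u | \Theta$ and $\Theta | u\phi$ should then fall out of a careful description of the kernel in terms of the inner-outer structure of $\phi$ relative to $\Theta$.

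First I would verify the invariance claim. Since $\phi \in H^\infty$, for $f \in K_\Theta$ one has $A^\Theta_\phi f = P_\Theta(\phi f)$, and the analyticity of $\phi$ means $\phi f \in H^2$, so $A^\Theta_\phi f = P_\Theta(\phi f) = \phi f - \Theta P_+(\bar\Theta \phi f)$. The key computation is to relate $A^\Theta_\phi(S^* f)$ to $S^*(A^\Theta_\phi f)$. Because $\phi$ is analytic it commutes with multiplication by $z$, and on $K_\Theta$ the compressed shift $S^*$ acts simply as $S^*|_{K_\Theta}$; I expect the standard intertwining $A^\Theta_\phi S^*_\Theta = S^*_\Theta A^\Theta_\phi$ to hold (this is exactly the statement that an analytic truncated Toeplitz symbol gives an operator in the commutant of the model operator). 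Granting this, if $A^\Theta_\phi f = 0$ then $A^\Theta_\phi(S^* f) = S^*(A^\Theta_\phi f) = 0$, so $\ker A^\Theta_\phi$ is invariant under $S^*|_{K_\Theta}$. Consequently its orthogonal complement in $K_\Theta$ is invariant under $S|_{K_\Theta}$, hence is a model space sitting inside $K_\Theta$, giving $K_\Theta \ominus \ker A^\Theta_\phi = K_u$ with $u | \Theta$.

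Next I would pin down $u$ via the divisibility $\Theta | u\phi$. The natural route is to describe $\ker A^\Theta_\phi$ explicitly: $f \in K_\Theta$ lies in the kernel iff $P_\Theta(\phi f) = 0$, i.e. $\phi f \in \Theta H^2$, i.e. $\bar\Theta \phi f \in H^2$. Writing $K_\Theta \ominus \ker A^\Theta_\phi = K_u$, the complementary decomposition $K_\Theta = K_u \oplus u K_{\Theta/u}$ (valid since $u|\Theta$) identifies $\ker A^\Theta_\phi = u K_{\Theta/u}$. I would then test the condition $\Theta | u\phi$ by checking that multiplication by $u\phi$ maps $K_{\Theta/u}$ — equivalently the generating reproducing kernels or the whole subspace $u K_{\Theta/u}$ — into $\Theta H^2$, which is precisely the kernel condition already obtained. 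Tracking the inner factor forces $u$ to be the smallest inner divisor of $\Theta$ for which $\Theta | u\phi$, and this is exactly the minimal inner function making the quotient $u\phi/\Theta$ analytic.

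The main obstacle I anticipate is the bookkeeping in the last step: establishing $\Theta | u\phi$ rigorously requires matching the abstract characterization $K_\Theta \ominus \ker A^\Theta_\phi = K_u$ with the concrete kernel description $\ker A^\Theta_\phi = \{ f \in K_\Theta : \bar\Theta\phi f \in H^2 \}$, and then arguing that the inner part of $\phi$ relative to $\Theta$ is captured precisely by $u$. The invariance argument is clean, but care is needed because $\phi$ may share common inner factors with $\Theta$, so one must argue at the level of inner-outer factorizations rather than pointwise divisibility; I expect the cleanest formulation uses that $u$ is characterized as $\Theta / (\Theta \wedge \phi)$ up to the relevant greatest common inner divisor, and verifying that this $u$ simultaneously satisfies both $u|\Theta$ and $\Theta | u\phi$ is where the real content lies.
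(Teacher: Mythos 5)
There is a genuine error in your main step. For analytic $\phi$, Sarason's commutant theorem gives $A^\Theta_\phi A^\Theta_z=A^\Theta_z A^\Theta_\phi$, i.e.\ commutation with the \emph{compressed shift} $A^\Theta_z$ --- not with its adjoint $S^*|_{K_\Theta}=A^\Theta_{\bar z}$. The intertwining $A^\Theta_\phi S^*=S^*A^\Theta_\phi$ that you invoke is false, and so is the conclusion that $\ker A^\Theta_\phi$ is $S^*$-invariant: take $\Theta=z^2$ and $\phi=z$, so that $K_\Theta=\operatorname{span}\{1,z\}$, $A^\Theta_\phi(1)=z$ and $A^\Theta_\phi(z)=0$; then $\ker A^\Theta_\phi=\bbC z$, while $S^*z=1\notin\bbC z$. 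You have also inverted the dichotomy of invariant subspaces: the subspaces of $K_\Theta$ of the form $K_u$ with $u|\Theta$ are exactly the $S^*$-invariant ones, whereas the subspaces invariant under the compressed shift are those of the form $K_\Theta\cap uH^2$. What you actually need is that $\ker A^\Theta_\phi$ is invariant under $A^\Theta_z$ (which does follow from Sarason's theorem), so that its orthogonal complement in $K_\Theta$ is invariant under $(A^\Theta_z)^*=S^*|_{K_\Theta}$ and is therefore a model space $K_u$ with $u|\Theta$. Once corrected in this way, your first step is essentially a reformulation of the paper's argument, which shows directly that $\ker A^\Theta_\phi\oplus\Theta H^2$ is $z$-invariant and applies Beurling's theorem to write it as $uH^2$.

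For the divisibility $\Theta|u\phi$ your sketch is both vague and more laborious than necessary. Identifying $u$ as $\Theta$ divided by the greatest common inner divisor of $\Theta$ and the inner part of $\phi$ can be made to work, but it requires precisely the inner--outer bookkeeping that you flag as delicate, and you do not carry it out. The Beurling description $uH^2=\ker A^\Theta_\phi\oplus\Theta H^2$ makes this a one-line argument, which is the route the paper takes: $\phi$ maps $\Theta H^2$ into $\Theta H^2$ because it is analytic, and it maps $\ker A^\Theta_\phi$ into $\Theta H^2$ by the very definition of the kernel; hence $\phi\, uH^2\subset\Theta H^2$, in particular $u\phi\in\Theta H^2$, i.e.\ $\Theta|u\phi$. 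No factorization of $\phi$ is needed.
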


\begin{proof}
We claim first that $\ker A^\Theta_\phi \oplus \Theta H^2$ is a $z$-invariant subspace of
$H^2$. Indeed, since $ \Theta H^2$ is $z$-invariant, what is left is to see
that $z(\ker A^\Theta_\phi) \subseteq \ker A^\Theta_{\phi} \oplus \Theta H^2.$
But $f\in \ker A^\Theta_\phi$ implies that $\phi f\in \Theta H^2$, whence
$z\phi f\in \Theta H^2.$ Thus, using the fact that 
$\phi(\Theta H^2) \subseteq \Theta H^2,$ we have that:
\[
A^\Theta_\phi (P_\Theta(zf))
= P_\Theta \phi P_\Theta(zf)=P_\Theta (\phi z f- \phi P_{\Theta H^2}(zf))=0,
\]
which means that  $P_\Theta(zf)\in \ker A^\Theta_\phi$ and thus $zf\in \ker A^\Theta_\phi \oplus \Theta H^2$.

It follows then that $\ker A^\Theta_\phi \oplus \Theta H^2=u H^2$ for some inner function $u$, and thus
\[
K_\Theta\ominus \ker A^\Theta_\phi= H^2\ominus (\ker A^\Theta_\phi+\Theta H^2)=K_u. 
\]
It is  immediate that
$K_u
\subseteq K_\theta $ is equivalent to $u|\Theta$. As for the second divisibility property, note first that, since $\phi$ is analytic, $\phi(\Theta H^2)\subset \Theta H^2$. Also, if $f\in \ker A^\Theta_\phi$, then 
\[
\phi f= P_\Theta(\phi f)+P_{\Theta H^2} (\phi f)= A^\Theta_\phi f+P_{\Theta H^2} (\phi f)=P_{\Theta H^2} (\phi f)\in \Theta H^2.
\]
Since $u H^2=\ker A^\Theta_\phi \oplus \Theta H^2$, it follows that $\phi u H^2\subset \Theta H^2$, whence $\Theta|u \phi$.
\end{proof}

\begin{theorem}\label{th:nilpotent}
Suppose $\Theta$ is inner, $\phi\in H^\infty$, and $(A^\Theta_\phi)^2=0$. If $k=\dim K_\Theta\ominus \ker A^\Theta_\phi$, then $A^\Theta_\phi\oplus 0_k$ is unitarily equivalent to a TTO.
\end{theorem}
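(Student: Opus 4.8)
The plan is to realize $A^\Theta_\phi\oplus 0_k$ as a single truncated Toeplitz operator on a larger model space: I claim it is unitarily equivalent to $A^{u\Theta}_{u\phi}$, where $u$ is the inner function produced by Lemma~\ref{le:kerofTTO}. The creative step is precisely this choice of the space $u\Theta$ and the symbol $u\phi$; the rank-one model case $\Theta=z^2,\ \phi=z$, where the answer turns out to be $A^{z^3}_{z^2}$, is what suggests it.

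First I would record the geometry coming from Lemma~\ref{le:kerofTTO}. It gives an inner $u$ with $K_\Theta\ominus\ker A^\Theta_\phi=K_u$, together with $u\mid\Theta$ and $\Theta\mid u\phi$; write $\Theta=uv$. Then $k=\dim K_u$, the standard orthogonal decomposition $K_\Theta=K_u\oplus uK_v$ identifies $\ker A^\Theta_\phi=K_\Theta\ominus K_u=uK_v$, and $\Theta\mid u\phi$ reads simply $v\mid\phi$. Since $(A^\Theta_\phi)^2=0$, the range of $A^\Theta_\phi$ lies in its kernel, so in the decomposition $K_u\oplus uK_v$ the operator has the strictly lower-triangular form $\left(\begin{smallmatrix}0&0\\ T&0\end{smallmatrix}\right)$, where $T=A^\Theta_\phi|_{K_u}\colon K_u\to uK_v$.

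Next I would compute $A^{u\Theta}_{u\phi}$ on the refined decomposition $K_{u\Theta}=K_u\oplus uK_u\oplus u^2K_v$ (obtained from $K_{u\Theta}=K_u\oplus uK_\Theta$ and $K_\Theta=K_u\oplus uK_v$). The symbol equivalence~\eqref{eq:same symbol} makes the diagonal blocks vanish: $u\phi\in uH^2$ gives $A^u_{u\phi}=0$, while $v\mid\phi$ gives $u\phi\in\Theta H^2$, hence $A^\Theta_{u\phi}=0$. For the surviving block, taking $f\in K_u$ one has $u\phi f\in uH^2\perp K_u$ and $P_{uK_\Theta}(u\phi f)=uP_\Theta(\phi f)=u\,A^\Theta_\phi f=u\,Tf\in u^2K_v$; taking $f\in u^2K_v$ one uses $v\mid\phi$ once more to push $u\phi f$ into $\Theta H^2$, so that image is $0$. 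Altogether I expect
\[
A^{u\Theta}_{u\phi}=\begin{pmatrix}0&0&0\\ 0&0&0\\ uT&0&0\end{pmatrix}
\]
on $K_u\oplus uK_u\oplus u^2K_v$. This block reduction, and in particular the careful projection bookkeeping across the nested spaces $K_u\subset K_\Theta\subset K_{u\Theta}$, is the main technical obstacle.

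Finally the unitary equivalence is immediate once the block form is known. Writing $A^\Theta_\phi\oplus 0_k$ on $K_u\oplus uK_v\oplus\EE$ with $\dim\EE=k$, I define a unitary onto $K_u\oplus u^2K_v\oplus uK_u$ by the identity on $K_u$, multiplication by $u$ from $uK_v$ onto $u^2K_v$, and any unitary from $\EE$ onto $uK_u$ (both $k$-dimensional). A one-line check shows it intertwines the two operators, since each annihilates everything outside the first summand and sends $f\in K_u$ to $Tf$, respectively to $uTf$. Thus $A^\Theta_\phi\oplus 0_k\cong A^{u\Theta}_{u\phi}$, a genuine TTO, and the argument works verbatim for $k=\infty$, the only change being that the matching $k$-dimensional and $\dim K_v$-dimensional pieces are now infinite-dimensional.
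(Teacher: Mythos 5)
Your proposal is correct and follows essentially the same route as the paper's own proof: both use Lemma~\ref{le:kerofTTO} to produce $u$, pass to $A^{u\Theta}_{u\phi}$ on the decomposition $K_{u\Theta}=K_u\oplus uK_u\oplus u(\ker A^\Theta_\phi)$, verify that only the $(3,1)$ block $uA^\Theta_\phi|_{K_u}$ survives, and finish by permuting the two copies of $K_u$. The only differences are cosmetic (writing $\Theta=uv$ and phrasing $\Theta\mid u\phi$ as $v\mid\phi$, plus an explicit intertwining unitary in place of the paper's permutation of summands).
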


\begin{proof}
Applying  Lemma~\ref{le:kerofTTO}, we have $K_\Theta=K_u\oplus \ker A^\Theta_\phi$ for an inner function $u$ with  $u|\Theta$ and $\Theta|u\phi$. The matrix of $A^\Theta_\phi $ corresponding to 
 the orthogonal decomposition $K_\Theta= K_u\oplus \ker A^\Theta_\phi$ is
 \begin{equation}\label{eq:matrix-nilpotent}
 A^\Theta_\phi=
 \begin{pmatrix}
0 & 0\\
A &0
\end{pmatrix}.
\end{equation}

Let us consider the TTO $A^{u\Theta}_{u\phi}$ acting on $K_{u\Theta}$. We want to find its matrix according to the orthogonal decomposition  
\[
K_{u\Theta}=K_u\oplus u K_\Theta= K_u\oplus uK_u\oplus u(\ker A^\Theta_\phi).
\]
First, if $f\in K_u$, then 
\[
A^{u\Theta}_{u\phi}f=P_{{u\Theta}}( u\phi f)=
u P_\Theta (\bar u u \phi f)=u A^\Theta_\phi f.
\]
Equation~\eqref{eq:matrix-nilpotent} shows us then that the first column of the matrix of $A^{u\Theta}_{u\phi}$ is $(0\ 0\ uA)^t$.

On the other hand, $\Theta|u\phi $ implies that $u\Theta|u^2\phi$, and therefore that $u\phi(uH^2)\subset u\Theta H^2$, whence the two remaining columns of the decomposition are formed only by zeros. Therefore the matrix of  $A^{u\Theta}_{u\phi}$ is
\[
\begin{pmatrix}
0&0&0\\
0&0&0\\
uA&0&0
\end{pmatrix}.
\]
Finally, an operator unitarily equivalent to $A^{u\Theta}_{u\phi}$ is obtained through  a permutation of the two first spaces (which have the same dimension, that of $K_u$). The matrix is then  transformed into 
\[
\begin{pmatrix}
0&0&0\\
0&0&0\\
0&uA&0
\end{pmatrix},
\]
which, again by~\eqref{eq:matrix-nilpotent}, defines an operator unitarily equivalent to $A^\Theta_\phi\oplus 0_{\dim K_u}$.
\end{proof}

Let us note that there is no loss of generality in supposing $\phi\in H^\infty$, rather than $\phi\in H^2$ and $A^\Theta_\phi$ bounded. Indeed, the last assumptions imply, by the commutant lifting theorem, that $A^\Theta_\phi$ has a symbol in $H^\infty$. Also,
 a similar result obviously holds in the case where $\phi$ is antianalytic. 

\section*{Acknowledgements}
M. Zarrabi was partially supported by the ANR-09-BLAN-0058-01.
D. Timotin thanks the University of Bordeaux for its hospitality during his stay in Bordeaux, which led to the completion of this paper.

%A concrete problem: is
%\[
%\begin{pmatrix}
%1&0&0\\1&1&0\\0&0&0
%\end{pmatrix}
%\]
%unitarily equivalent to a TTO?
%


\begin{thebibliography}{22}

\bibitem{A} {\sc Aleksandrov, A.B.} Invariant subspaces of shift operators. An axiomatic approach. \emph{J. Soviet Math.} 22 (1983), 1695--1708.

\bibitem{BBK} {\sc Baranov, A., Bessonov, R., Kapustin, V.}
\newblock Symbols of truncated {T}oeplitz operators.
\newblock Preprint.


\bibitem{BCFMT} {\sc Baranov, A., Chalendar, I., Fricain, E., Mashreghi, J., Timotin, D.}
\newblock Bounded symbols and reproducing kernel thesis for truncated {T}oeplitz operators.
\newblock \emph{J. Functional Analysis}, to appear.



\bibitem{CGRW}
{\sc Cima, J., Garcia, S. Ross, W. and Wogen, W.}
\newblock Truncated Toeplitz operators: spatial
isomorphism, unitary equivalence, and similarity.
\newblock {\em Indiana Univ. Math. J.}, to appear.

\bibitem{CMR} {\sc Cima, J., Matheson, A., Ross, W.}, \emph{The Cauchy Transform}, AMS, Providence, 2006.


\bibitem{Cima-2008}
{\sc Cima, J., Ross, W. and Wogen, W.}
\newblock Truncated {T}oeplitz operators on finite dimensional spaces.
\newblock {\em Oper. Matrices} 2 (2008), 357--369.
 
\bibitem{Co} {\sc Cowen, C.} \newblock On equivalence of Toeplitz operators.
\newblock {\emph{J. Operator Theory}  7  (1982), no. 1, 167--172.}



\bibitem{N} {\sc Nikolski, N. K.} {\it Treatise on the shift operator.}
\newblock Springer-Verlag, Berlin, 1986. Grundlehren der Mathematischen Wissenschaften, 273.

\bibitem{S1} {\sc Sarason, D.}
\newblock Generalized interpolation in $H^\infty$. 
\newblock {\em Trans. Amer. Math. Soc.} 127 (1967), 179--203.


\bibitem{S}
{\sc Sarason, D.}
\newblock Algebraic properties of truncated {T}oeplitz operators.
\newblock {\em Oper. Matrices 1} (2007), 491--526.


\bibitem{SNF} {\sc Sz.-Nagy, B.,  Foias, C.}
\newblock {\it Analyse harmonique des op\'erateurs de l'espace de Hilbert.}
\newblock Masson, 1967.



\end{thebibliography}
\end{document}